\documentclass[paper,10pt]{article}
\usepackage[utf8]{inputenc}
\usepackage{amsmath}
\usepackage{amsthm}
\usepackage{amssymb}
\usepackage{xcolor}
\usepackage{comment}

\usepackage{amsfonts, latexsym, fancyhdr, graphicx, subcaption}
\usepackage{epstopdf} 
\usepackage{epsfig} 

\textwidth=18cm \textheight=22.25 cm
\topmargin=-1.0 cm \oddsidemargin=-0.5cm \evensidemargin=0cm

\usepackage{tikz}
\usetikzlibrary{
intersections, arrows.meta,
automata,er,calc,
backgrounds,
mindmap,folding,
patterns,
decorations.markings,
fit,
shapes,matrix,
positioning,
shapes.geometric,
arrows,through
}

\newtheorem{theorem}{Theorem}
\newtheorem{lemma}{Lemma}

\newcommand{\mostafa}[1]{\textcolor{red}{#1}}
\newcommand{\mattia}[1]{\textcolor{cyan}{#1}}

\newtheorem{corollary}{Corollary}

\newtheorem{proposition}{Proposition}
\newtheorem{conjecture}{Conjecture}

\theoremstyle{definition}

\title{A multigroup approach to delayed prion production}

\author{Mostafa Adimy$^{1}$, Abdennasser Chekroun$^{2}$, Laurent Pujo-Menjouet$^{3}$,\\ Mattia Sensi$^{4,5,*}$\\[1em]
$^1${\footnotesize Inria, Univ Lyon, Université Claude Bernard Lyon 1, CNRS UMR 5208, Institut Camille Jordan,}\\{\footnotesize F-69603 Villeurbanne, France}\\[0.5em]
$^2${\footnotesize Laboratoire d'Analyse Nonlin\'{e}aire et Math\'{e}matiques Appliqu\'{e}es, University of Tlemcen, Tlemcen 13000, Algeria}\\[0.5em]
$^3${\footnotesize Univ Lyon, Inria, Université Claude Bernard Lyon 1, CNRS UMR 5208, Institut Camille Jordan,}\\{\footnotesize F-69603 Villeurbanne, France}\\[0.5em]
$^4${\footnotesize MathNeuro Team, Inria at Universit\'e C\^ote d'Azur, 2004 Rte des Lucioles, 06410 Biot, France},\\
$^5${\footnotesize Department of Mathematical Sciences ``G. L. Lagrange'', Politecnico di Torino,}\\ \footnotesize{Corso Duca degli Abruzzi 24, 10129 Torino Italy}\\$^*${\footnotesize Corresponding author: \texttt{mattia.sensi@polito.it}}}
\date{\today}

\begin{document}

\maketitle

\begin{abstract}
    We generalize the model proposed in [Adimy, Babin, Pujo-Menjouet, \emph{SIAM Journal on Applied Dynamical Systems} (2022)] for prion infection to a network of neurons. We do so by applying a so-called \emph{multigroup approach} to the system of Delay Differential Equations (DDEs) proposed in the aforementioned paper. We derive the classical threshold quantity $\mathcal{R}_0$, \textit{i.e.} the basic reproduction number, exploiting the fact that the DDEs of our model qualitatively behave like Ordinary Differential Equations (ODEs) when evaluated at the Disease Free Equilibrium. We prove analytically that the disease naturally goes extinct when $\mathcal{R}_0<1$, whereas it persists when $\mathcal{R}_0>1$. We conclude with some selected numerical simulations of the system, to illustrate our analytical results.
\end{abstract}

\section{Introduction}


Prion is a protein involved in neurodegenerative diseases and more 
particularly the transmissible spongiform encephalopathies such as scrapie
for sheep,  bovine spongiform encephalopathy, also known as mad cow disease
in cattle, and the Creutzfeldt-Jakob disease in humans 
\cite{prusiner_prion_1998,roucou_cellular_2005}. Produced by the cells, this
protein in its normal form is called PrP$^C$ (for Prion Protein Cellular)
and appears to be protective \cite{roucou_cellular_2005}. However, 
it becomes harmful and fatal  when its shape changes. This misfolded pathological  conformation also known as PrP$^{Sc}$ (for Prion Protein 
Scrapie) can be
acquired either through transmission (this was the case for instance under the mad cow disease spread in the 1990s), or spontaneously, mostly above 75 years old for humans  \cite{prusiner_prions_1998}. 

Even if extensively studied in the past decades, the action of this protein on the neurons leading to a fatal issue remains unclear. However, some recent discoveries may bring possible explanations and open new therapeutic strategies. This mechanism also known as Unfolded Protein Response (or UPR) \cite{genereux_regulating_2015,hetz_disturbance_2014,hetz_er_2017,hetz_mechanisms_2020,smith_unfolded_2016} can be described as follows.

First, when produced by the cell, the PrP$^C$ proteins remain anchored to its membrane, unless misfolded PrP$^{Sc}$ in the extracellular matrix forces it to set it free and to join the pathological cohort. It is important to remind here that, by contact, a PrP$^{Sc}$
protein allows the normal form PrP$^C$ to change its conformation and to become misconformed.
Once in this state, the proteins have the ability to polymerize, that is to tie together. 
They can easily reach very large sizes, stay in the neighbourhood of the cell or diffuse in the extracellular matrix to seed other neurons (see Fig. \ref{neuron1}). 
\begin{figure}[!h]
	\begin{center}
\includegraphics[width=13cm]{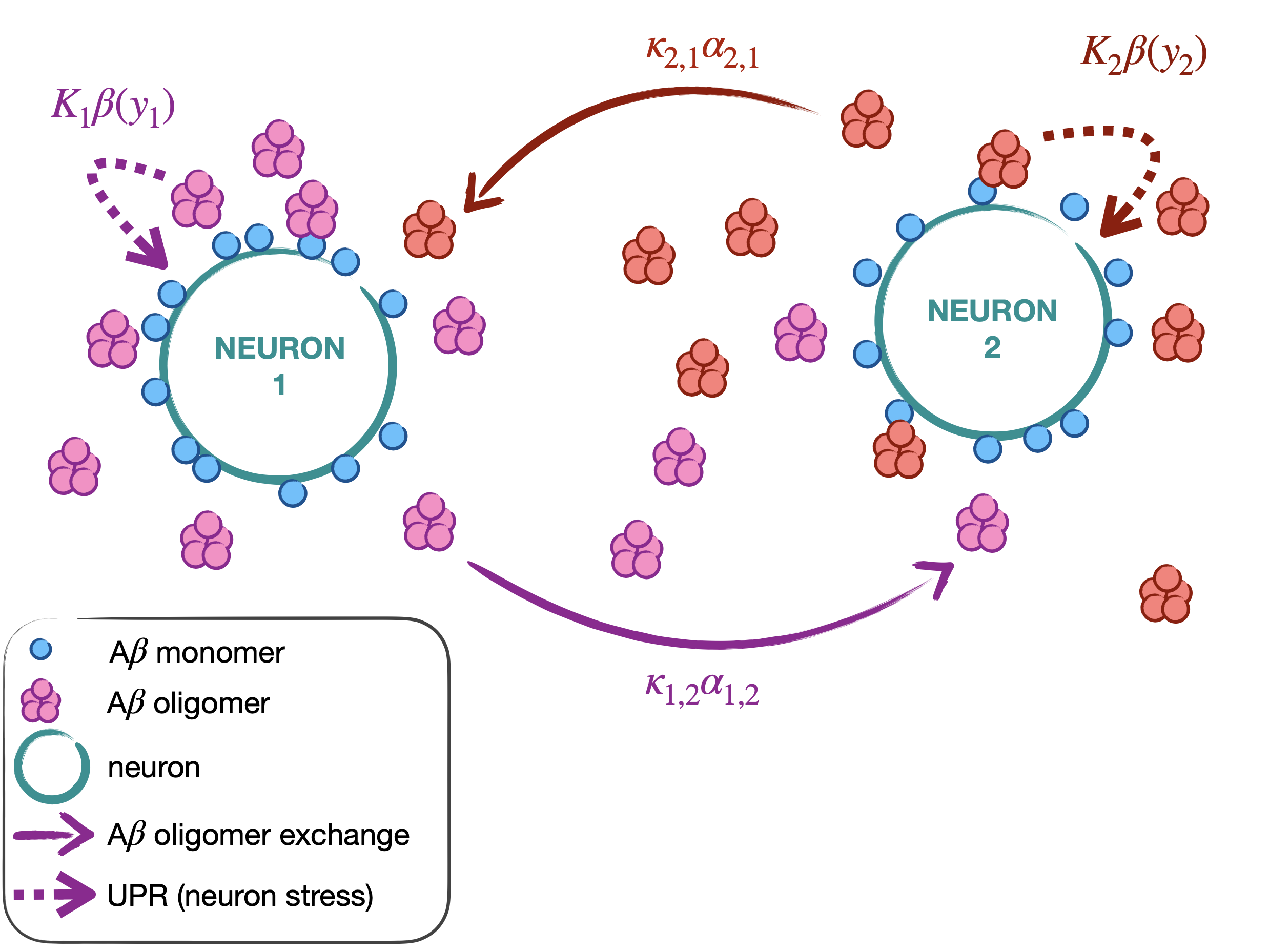}
		\caption{{\small schematic view of the PrP$^C$ protein production (in blue) by two neurons (green). The PrP$^C$ protein
  can aggregate and form pathological PrP$^{Sc}$ (pink and orange). The PrP$^{Sc}$ proteins diffuse and a certain amount can reach 
  the neighbourhood of another neuron (the orange ones can reach the neighbourhood of neuron 1, while the pink ones can reach neuron 2. We refer to Section \ref{sec:2neurons} for a complete description of this case and of the parameters and variables involved.}}
	\label{neuron1}
	\end{center}
\end{figure}

If for some reason, such as an over-expression of PrP$^C$ or a slow diffusion,  they accumulate in the neuron proximity, this latter feels it and under this induced stress shuts down almost all its activities except the vital ones. 

This global shutdown, created by a high concentration of PrP$^{Sc}$
in the neuron surrounding, causes the neuron to stop producing 
PrP$^C$, not vital for the cell (see Fig. \ref{neuron2}). 
\begin{figure}[!h]
	\begin{center}
\includegraphics[width=13cm]{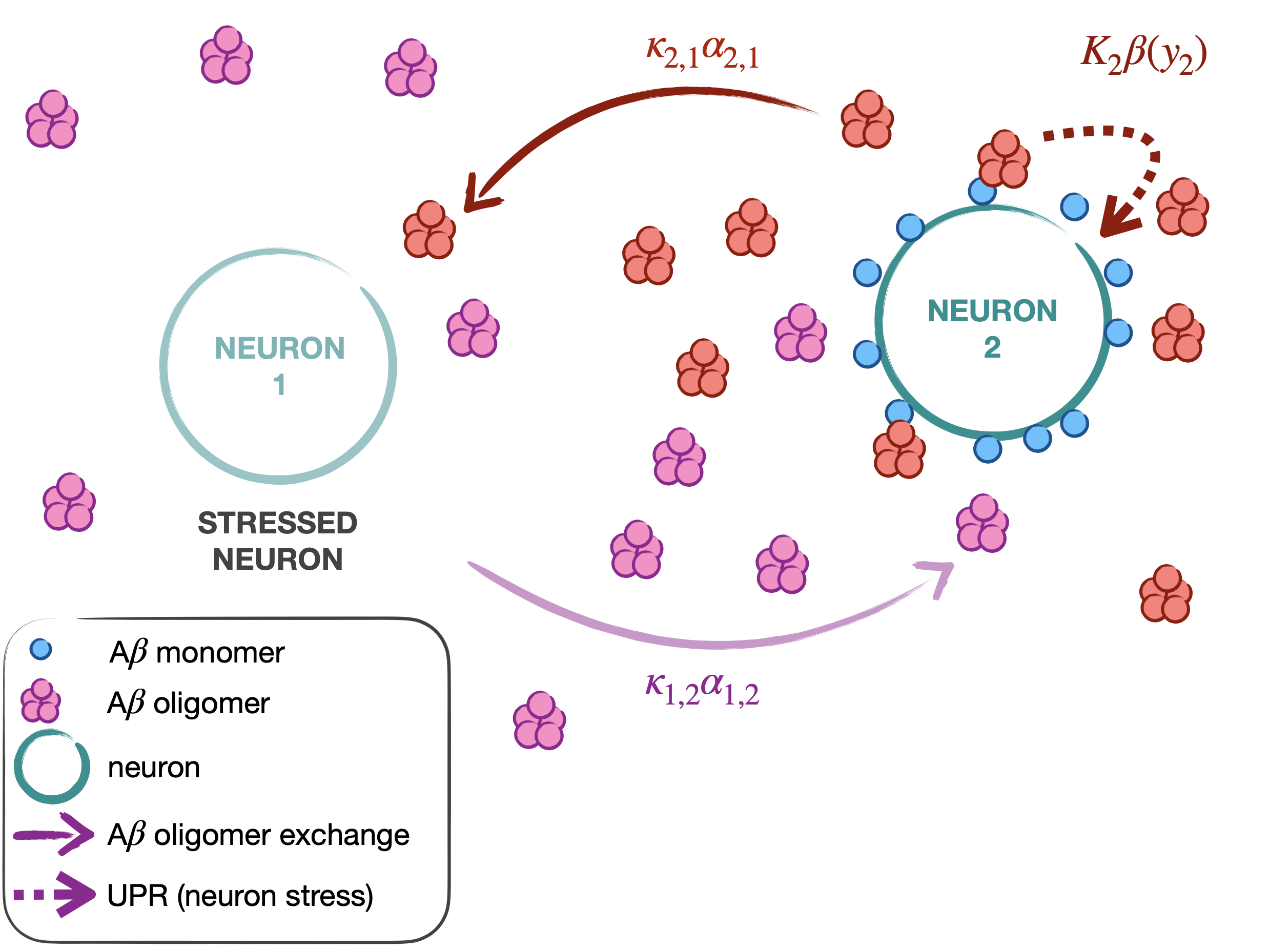}
		\caption{{\small schematic representation of a neuron (neuron 1) under Unfolded Protein Response (UPR). Stressed
  by the overcrowded amount of PrP$^{Sc}$ in its neighbourhood, neuron 1 shuts down its activities (except the vital ones). No  
  PrP$^C$ protein is then produced, and the population of PrP$^{Sc}$
  pathological proteins diffuse out the neuron surroundings.}}
	\label{neuron2}
	\end{center}
\end{figure} 

This break ends only if these 
proteins move away by diffusion or degradation. When the 
zone is clear, the cell starts again its protein production and the process continues until the next stress period.

Still under investigation, the detailed UPR mechanism remains to be
fully understood, even if several papers may be referred to the 
reader \cite{hetz_disturbance_2014,hetz_er_2017,hetz_mechanisms_2020, 
smith_unfolded_2016}. Besides, the link between UPR, PrP$^{Sc}$ has 
been put in evidence \cite{hetz_caspase-12_2003, 
tanaka_enhanced_2020,schneider_cellular_2021, 
smith_astrocyte_2020,moreno_sustained_2012, torres_prion_2011}.

Because of its complexity, the UPR \textit{modus operandi} has already been the object of mathematical models, from a gene regulatory point of view \cite{cook_knockdown_2014, schnell_model_2009,trusina_rationalizing_2008,trusina_unfolded_2010, wiseman_adaptable_2007} or through regulation of UPR intra- or extra-cellular pathways \cite{trusina_rationalizing_2008,trusina_unfolded_2010}. Our goal here is to generalise the pioneering mathematical model \cite{adimy2022neuron} dealing explicitly with prion, and investigating the parameters causing the oscillating neural activity. In \cite{adimy2022neuron}, the authors investigated the case of one neuron only, and for two neurons they gave analytical results specifically when both cells would exhibit the exact
same behaviour. In this paper, we briefly remind the model with two neurons and give new theoretical results to complete the ones of \cite{adimy2022neuron}, then we extend the construction to any neuron number $n \in \mathbb{N}$, $n\geq 2$.


We exploit the formulation of the Delay Differential Equation (DDE) system set up in detail in \cite{adimy2022neuron}, where the delay is only present in the infectious/infected variables, in order to apply a classical tool of Ordinary Differential Equations (ODE) epidemic models, namely the Next Generation Matrix. This technique was first introduced in \cite{diekmann1990definition}, then generalized in \cite{VandenDriesscheWatmough} (see also \cite{diekmann2010construction}). Through an appropriate decomposition of the Jacobian matrix evaluated at the Disease Free Equilibrium, we are able to provide a formulation for the Basic Reproduction Number $\mathcal{R}_0$ of the $n$-dimensional system, under biologically acceptable conditions.

Then, we apply the definition of the threshold quantity $\mathcal{R}_0$ to prove either global stability of the Disease Free Equilibrium (when $\mathcal{R}_0<1$) or permanence of the system (when $\mathcal{R}_0>1$). Moreover, under slightly stricter conditions, we are able to prove the existence of at least one Endemic Equilibrium.

The paper is structured as follows. In Section \ref{sec:2neurons}, we recall the $2$ neurons model introduced in \cite{adimy2022neuron}. In Section \ref{sec:n-neur}, we generalize this construction to a network of $n$ neurons, with $n \geq 2$; moreover, we show two other results: first, how the fully connected and fully homogeneous case can be qualitatively reduced to a single neuron model (but with a different $\mathcal{R}_0$) proposed in \cite{adimy2022neuron}, and second how the case of one-way direction connection of several neurons behaves like a single one.  In Section \ref{sec:DFE}, we prove global stability of the Disease Free Equilibrium when $\mathcal{R}_0<1$. In Section \ref{sec:endemeq}, we show a condition for the existence (but not uniqueness) of the Endemic Equilibrium. In Section \ref{Section 5}, we show the persistence of the system when $\mathcal{R}_0>1$. In Section \ref{sec:numer}, we provide extensive numerical simulations of the model proposed in Section \ref{sec:n-neur}. Lastly, in Section \ref{sec:concl} we conclude.

\section{System with 2 neurons}\label{sec:2neurons}

We begin by recalling the system of $2$ neurons from \cite{adimy2022neuron}. It describes the dynamics of the PrP$^C$ protein associated with neuron 1 and neuron 2, respectively $x_1$ and $x_2$, as well as the PrP$^{Sc}$ concentrations in the environment of neuron 1 and neuron 2, $y_1$ and $y_2$. Due to their biological interpretation, we only consider $x_i,y_i\geq0$. This model is represented, for $t>0$, by the following system 
\begin{equation}
\begin{split}
\frac{\mathrm{d}x_1}{\mathrm{d}t} {}={}& K_1\beta(y_1(t-T_1)) - \mu_1 x_1(t) - d x_1(t)\left(y_1(t)+ \kappa \alpha_2 y_2(t)\right),\\
\frac{\mathrm{d}x_2}{\mathrm{d}t} {}={}& K_2\beta(y_2(t-T_2)) - \mu_2 x_2(t) - d x_2(t)\left(y_2(t)+ \kappa \alpha_1 y_1(t)\right),\\
\frac{\mathrm{d}y_1}{\mathrm{d}t} {}={}&  d x_1(t)\left(y_1(t)+ \kappa \alpha_2 y_2(t)\right) -\alpha_1 y_1(t), \\
\frac{\mathrm{d}y_2}{\mathrm{d}t} {}={}&  d x_2(t)\left(y_2(t)+ \kappa \alpha_1 y_1(t)\right) -\alpha_2 y_2(t),
\end{split}
\label{eq:two_neuron_system}
\end{equation}
where $K_i > 0$ ($i=1$ or $2)$ represents the PrP$^{C}$ production rate of the neuron $i$ and $d > 0$ characterizes the force of the interaction between PrP$^{C}$ and PrP$^{Sc}$. The terms $d x_1(t)\left(y_1(t)+ \kappa \alpha_2 y_2(t)\right)$ and $d x_2(t)\left(y_2(t)+ \kappa \alpha_1 y_1(t)\right)$ stand for the new PrP$^{Sc}$ produced. The parameter $\mu_i$ represents the degradation rate of PrP$^{C}$ produced by the neuron $i$ and $\alpha_i$ is the rate at which PrP$^{Sc}$ proteins are lost through degradation or diffusion. The factor $\kappa$ indicates the interaction between proteins from different neurons. The parameter $T_i$ is the time required for a neuron $i$ to process the  PrP$^{C}$ protein synthesis.
Due to the UPR effect, increasing the amount of PrP$^{Sc}$ around a neuron decreases its activity and consequently the PrP$^{C}$ production. The contribution of PrP$^{Sc}$ concentration to PrP$^{C}$ production is therefore given through the decreasing Hill function (negative feedback \cite{adimy2022neuron})
\begin{equation}\label{eqn:bbeta}
    \beta(y)=\dfrac{1}{1+(y/y_c)^p},
\end{equation}
where $p>0$ is the sensitivity of PrP$^{Sc}$ production to PrP$^{Sc}$ overload. The parameter $y_c>0$ is the PrP$^{Sc}$ threshold beyond which the neuron stops PrP$^{C}$ production.

Compared to the notation in \cite{adimy2022neuron}, to avoid confusion we write $\beta$ instead of $\beta_n$ since $n$ will represent the number of neurons in the system from Section \ref{sec:n-neur} onward.

The Disease Free Equilibrium corresponding to the system \eqref{eq:two_neuron_system}
is
\begin{equation}\label{dfe_2}
(x_1,x_2,y_1,y_2)=\bigg( \dfrac{K_1}{\mu_1}, \dfrac{K_2}{\mu_2},0,0   \bigg).
\end{equation}
The Jacobian matrix of the system \eqref{eq:two_neuron_system} is
$$
J=\begin{pmatrix}
-\mu_1-d(y_1+\kappa\alpha_2 y_2) & 0 & K_1 \beta'(y_1)-d x_1  & -d x_1 \kappa \alpha_2 \\
0 & -\mu_2-d(y_2+\kappa\alpha_1 y_1) & -d x_2 \kappa \alpha_1  & K_2 \beta'(y_2)-d x_2 \\
d(y_1+\kappa\alpha_2 y_2) & 0 & d x_1 - \alpha_1 & d x_1 \kappa \alpha_2 \\
0 & d(y_2+\kappa\alpha_1 y_1) & d x_2 \kappa \alpha_1 &  d x_2 -\alpha_2 
\end{pmatrix}.
$$
Note that $\beta'(0)=0$. If we evaluate $J$ at the Disease Free Equilibrium \eqref{dfe_2}, we obtain
$$
J_{\text{DFE}}=\begin{pmatrix}
-\mu_1 & 0 & -d \frac{K_1}{\mu_1} & -d \kappa \alpha_2 \frac{K_1}{\mu_1} &  \\
0 & -\mu_2 & -d \kappa \alpha_1\frac{K_2}{\mu_2} &  -d\frac{K_2}{\mu_2} &  \\
0 & 0 & d \frac{K_1}{\mu_1} - \alpha_1 & d \kappa \alpha_2 \frac{K_1}{\mu_1} &  \\
0 & 0 & d \kappa \alpha_1 \frac{K_2}{\mu_2}  & d \frac{K_2}{\mu_2}-\alpha_2 
\end{pmatrix}.
$$
Now, we use the Next Generation Matrix method, firstly introduced in \cite{diekmann1990definition}, then generalized in \cite{VandenDriesscheWatmough} (see also \cite{diekmann2010construction}) to obtain the basic reproduction number $\mathcal{R}_0$ of the system \eqref{eq:two_neuron_system}. In order to do so, we need to write $J_{\text{DFE}}$ as $J_{\text{DFE}}=M-V$, with $M$ having non-negative entries and $V$ invertible. One possible choice is the following
$$
M= \begin{pmatrix}
0 & 0 & 0 & 0  \\
0 & 0 & 0 &  0   \\
0 & 0 & d \frac{K_1}{\mu_1}  & d \kappa \alpha_2 \frac{K_1}{\mu_1}   \\
0 & 0 & d \kappa \alpha_1 \frac{K_2}{\mu_2} &  d \frac{K_2}{\mu_2} 
\end{pmatrix} \quad \text{and} \quad
V=\begin{pmatrix}
\mu_1  &  0  &  d \frac{K_1}{\mu_1} &   d \kappa \alpha_2 \frac{K_1}{\mu_1}   \\
0 & \mu_2   &       d \kappa \alpha_1\frac{K_2}{\mu_2}  & d\frac{K_2}{\mu_2} &  \\
0 & 0 &  \alpha_1 & 0   \\
0 & 0 & 0 &  \alpha_2 
\end{pmatrix}.
$$
The basic reproduction number $\mathcal{R}_0$ is exactly $\rho(MV^{-1})$. Remark that, in order to compute this spectral radius, we implicitly assumed that $\alpha_1,\alpha_2 \neq 0$. This means that each neuron receives a strictly positive amount of infection from the other.
Recall that in \cite{adimy2022neuron} the basic reproduction number of neuron $i$ was computed as 
\[R_{0i}=d \frac{K_i}{\mu_i \alpha_i}.\]
Since the first two rows of $M$ are $0$, it suffices to observe the matrix
$$
F=\begin{pmatrix}
d \frac{K_1}{\mu_1 \alpha_1} &  d \kappa \frac{K_1}{\mu_1} \\
d \kappa \frac{K_2}{\mu_2} & d \frac{K_2}{\mu_2 \alpha_2}  
\end{pmatrix}=\begin{pmatrix}
 R_{01} &   \kappa \alpha_1  R_{01} \\
 \kappa \alpha_2  R_{02} &  R_{02}
\end{pmatrix},
$$
which has eigenvalues
\begin{equation}\label{eq:lam_pm}
\lambda_\pm = \dfrac{R_{01}+R_{02}\pm \sqrt{ (R_{01}-R_{02})^2+4\kappa^2 \alpha_1\alpha_2 R_{01}R_{02}}}{2},
\end{equation}
with $\lambda_+=\rho(F)$ being the new $\mathcal{R}_0$ of the $2$-neurons system. We remark that the connectivity between the two neurons $\kappa$ plays a fundamental role in the dynamics: even if both $R_{0i}<1$, with $\kappa$ large enough the disease could remain endemic. However, we recall that due to its biological interpretation, the relevant region we should consider is $\kappa \in [0,1]$.

In the next section, we generalize this construction to a network of $n\in \mathbb{N}_{\geq 2}$ neurons.

\section{System with $n$ neurons}\label{sec:n-neur}

The construction from the previous section can be generalized to a $n$ neurons case by similarly constructing the matrices $M$ and $V$. In this case, we would generally obtain $\mathcal{R}_0$ implicitly, as the spectral radius of a $2n \times 2n$ matrix. However, such a matrix can be reduced to $n \times n$ as in the previous section since $M$ will only have the lower-right quarter of non-zero entries.

We consider the following system of Delay Differential Equations (DDEs)
\begin{equation}
\begin{split}
\frac{\mathrm{d}x_i}{\mathrm{d}t} {}={}& K_i\beta(y_i(t-T_i)) - \mu_i x_i(t) - d x_i(t)\left(y_i(t)+ \sum_{j\neq i} \kappa_{ji} \alpha_{j\rightarrow i} y_j(t)\right),\\
\frac{\mathrm{d}y_i}{\mathrm{d}t} {}={}&  d x_i(t) \left(y_i(t)+ \sum_{j\neq i} \kappa_{ji} \alpha_{j\rightarrow i} y_j(t)\right) - \left(\sum_{j\neq i} \alpha_{i\rightarrow j } \right)  y_i(t). \label{eqn:nneurons}
\end{split}
\end{equation}

Due to their biological interpretation, we only consider $x_i,y_i\geq0$. The parameter $\alpha_{i\rightarrow j}$ represents the fraction of prions produced by neuron $i$ and moving towards neuron $j$. It also includes prion degradation. In other words, $\alpha_{i\rightarrow j}$ describes the diffusive property (including degradation) of PrP$^{Sc}$ to the neuron $j\neq i$. This includes both prions which die while moving away and prions which actually reach neuron $j$. The interactions between PrP$^{C}$ from neuron $i$ with PrP$^{Sc}$ of another neuron $j\neq i$ is given by the factor $\kappa_{ji}$ (it characterizes the difference between prion species); hence, $\sum_{j \neq i} \kappa_{ji}\leq 1$ for all $j$, since this sum represents the fraction of prions ``orbiting'' neuron $i$ (neuron has a number of prions it can spread to others) which does not die and manages to spread to other neurons.

For ease of notation, let 
$$\alpha_i :=\sum_{j\neq i} \alpha_{i\rightarrow j }$$
denote the total rate of migration of prions from neuron $i$, which can result in either the death of the prion or contact with any other neuron $j\neq i$.

Let  $C:=C([-T,0],\mathbb{R})$, $T:=\max_{i=1, \dots, n} T_i$, be the space of continuous functions on $[-T,0]$ and $C^+:=C([-T,0],\mathbb{R}^+)$ be the space of nonnegative continuous functions on $[-T,0]$. We assume throughout this paper that the initial conditions for the system \eqref{eqn:nneurons}, i.e. $(x_{i0},\varphi_i) \in \mathbb{R}^+\times C^+$, for $i=1,...,n$. The existence and uniqueness of nonnegative solutions of \eqref{eqn:nneurons} can be obtained by using the theory of functional differential equations.

Since the delay is discrete, the continuity of $\beta$ is sufficient to ensure the existence and uniqueness of the solution (see, \cite{Hale1993S,Kuang1993AP}). We call the \emph{history function} each function $u_t \in C$, for $t\geq 0$ and $u \in C([-T,+\infty),\mathbb{R})$ satisfying $u_t(\theta)=u(t+\theta)$ for $\theta \in [-T,0]$. Now, we show the nonnegativity and boundedness of solutions of the system \eqref{eqn:nneurons}.
\begin{proposition}\label{probound}
All solutions of the system \eqref{eqn:nneurons} with nonnegative initial conditions remain nonnegative and bounded.
\end{proposition}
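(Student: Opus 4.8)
The plan is to treat the two assertions separately, since they rely on different structural features of \eqref{eqn:nneurons}.

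For \emph{nonnegativity}, I would invoke the standard invariance (quasipositivity) criterion for delay differential equations (see, e.g., \cite{Hale1993S,Kuang1993AP}): the cone $\mathbb{R}^+\times C^+$ is forward invariant provided the vector field never points strictly outward across the boundary of the nonnegative orthant, i.e. whenever all components and all histories are nonnegative and a single component vanishes, the corresponding right-hand side of \eqref{eqn:nneurons} is nonnegative. I would verify this directly in the two possible cases. If $x_i(t)=0$, the $x_i$-equation collapses to $K_i\beta(y_i(t-T_i))$, which is nonnegative because $K_i>0$ and $0<\beta\le 1$ by \eqref{eqn:bbeta}. If $y_i(t)=0$, the $y_i$-equation collapses to $d\,x_i(t)\sum_{j\neq i}\kappa_{ji}\alpha_{j\rightarrow i}\,y_j(t)$, a sum of products of nonnegative factors, hence nonnegative. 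This shows solutions issued from $\mathbb{R}^+\times C^+$ remain nonnegative.

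The same conclusion can be made self-contained through a first-exit-time argument combined with the variation-of-constants formula. Writing the $x_i$-equation as $\dot x_i + a_i(t)\,x_i = K_i\beta(y_i(t-T_i))$ with the continuous coefficient $a_i(t):=\mu_i + d\big(y_i(t)+\sum_{j\neq i}\kappa_{ji}\alpha_{j\rightarrow i}y_j(t)\big)$, and isolating the nonnegative source $d\,x_i(t)\sum_{j\neq i}\kappa_{ji}\alpha_{j\rightarrow i}y_j(t)$ in the $y_i$-equation, each component equals a strictly positive integrating factor times a nonnegative initial datum plus a nonnegative integral, \emph{provided} every component has stayed nonnegative on the interval under consideration. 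Letting $t^\ast$ be the first instant at which some component reaches zero, these representations show that no component can cross below zero at $t^\ast$. The delicate point, and the main obstacle, is precisely this coupling: nonnegativity of $x_i$ enters the source of $y_i$ and conversely, while the delayed argument of $\beta$ must remain in the region $y\ge 0$ where \eqref{eqn:bbeta} is well defined; the argument must therefore be run simultaneously for all components up to the common exit time $t^\ast$, rather than one scalar equation at a time.

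For \emph{boundedness}, the essential observation is that the nonlinear interaction terms cancel in the combination $z_i:=x_i+y_i$. Summing the two equations of \eqref{eqn:nneurons} gives $\dot z_i = K_i\beta(y_i(t-T_i)) - \mu_i x_i - \alpha_i y_i$; using $\beta\le 1$ together with $x_i,y_i\ge 0$ from the first part yields the differential inequality $\dot z_i \le K_i - m_i\,z_i$, where $m_i:=\min\{\mu_i,\alpha_i\}$, and $\alpha_i>0$ is the natural (and here implicitly assumed) condition that each neuron loses prions, so that $m_i>0$. A standard comparison with the linear equation $\dot w = K_i - m_i w$ then gives $z_i(t)\le \max\{x_i(0)+y_i(0),\,K_i/m_i\}$ for all $t\ge 0$. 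Since $x_i$ and $y_i$ are nonnegative and dominated by $z_i$, they are uniformly bounded; this also precludes finite-time blow-up and hence upgrades local to global existence, completing the argument.
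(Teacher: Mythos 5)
Your proposal is correct and follows essentially the same route as the paper: nonnegativity via the quasipositivity/invariance criterion for DDEs (the paper invokes Theorem 3.4 of Smith's monograph) with exactly the same two boundary checks, and boundedness via summing the $x_i$ and $y_i$ equations so the interaction terms cancel, then comparing with $\dot z_i \le K_i - \min\{\mu_i,\alpha_i\}\,z_i$. Your additions (the explicit bound $z_i(t)\le\max\{x_i(0)+y_i(0),K_i/\min\{\mu_i,\alpha_i\}\}$ in place of the paper's $\limsup$ estimate, and the supplementary first-exit-time sketch) are sound but do not change the argument.
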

\begin{proof}
We prove nonnegativity by applying the Theorem 3.4 of \cite{Smith2011SNY}. In fact, for $i=1,...,n$, if $x_i(t)=0$ then 
\[
\dfrac{\mathrm{d}x_i}{\mathrm{d}t}=K_i\beta(y_i(t-T_i)) \geq0, \quad \text{for} \ \ y_{i}\in C^+,
\]
and if $y_i(t)=0$ then
\[
\dfrac{\mathrm{d} y_i}{\mathrm{d}t}=d x_i(t)  \sum_{j\neq i} \kappa_{ji} \alpha_{j\rightarrow i} y_j(t)\geq 0, \quad \text{for} \ \  x_i\in \mathbb{R}^+, \  y_{j}\in C^+. 
\]
Then, by Theorem 3.4 of \cite{Smith2011SNY}, we get $x_i(t)\geq0$ and $y_i(t)\geq0$ for $t\geq0$.

Now, by adding both equation of $x_i$ and $y_i$, we get, for $t\geq0$,
\[
\dfrac{\mathrm{d}x_i}{\mathrm{d}t}+\dfrac{\mathrm{d}y_i}{\mathrm{d}t}=K_i\beta(y_i(t-T_i)) - \mu_i x_i(t) - \left(\sum_{j\neq i} \alpha_{i\rightarrow j } \right)  y_i(t).
\]
This implies that, for $t\geq0$,
\[
\dfrac{\mathrm{d}(x_i+y_i)}{\mathrm{d}t}\leq K_i\beta(0)-\min\{ \mu_i,\alpha_i\}(x_i+y_i).
\]
This means that
\[
\limsup_{t\rightarrow+\infty}x_i(t)+y_i(t)\leq \dfrac{K_i\beta(0)}{\min\{ \mu_i,\alpha_i\}}.
\]
Therefore, the solution should be necessarily bounded.
\end{proof}
We now introduce a formula for the Basic Reproduction Number (BRN) $\mathcal{R}_0$ of the system \eqref{eqn:nneurons}, given as the spectral radius of an $n\times n$ matrix. We do so by applying the Next Generation Matrix method \cite{diekmann1990definition,VandenDriesscheWatmough,diekmann2010construction}. We remark that this method was developed specifically for systems of ODEs. However, when evaluated in its Disease Free Equilibrium, namely
\begin{equation}\label{DFE}
   (x_1,\dots,x_n,y_1,\dots,y_n)=\bigg( \dfrac{K_1}{\mu_1},\dots, \dfrac{K_n}{\mu_n},0,\dots,0   \bigg),
\end{equation} 
the system \eqref{eqn:nneurons} does not exhibit any form of delay, and qualitatively reduces to a system of ODEs. We focus on the Jacobian on the system evaluated in this equilibrium, obtaining a reliable threshold quantity.

\begin{proposition}\label{prop:BRN}
Recall from  \cite{adimy2022neuron} that the ``Basic Reproduction Number of neuron $i$'' is $R_{0i}=d \frac{K_i}{\mu_i \alpha_i}$.

    The Basic Reproduction Number $\mathcal{R}_0$ of the system \eqref{eqn:nneurons} is given by the spectral radius of the matrix $F\in \mathbb{R}^{n \times n}$ defined as
\begin{equation}\label{eq:matrixF}
(F)_{ij}=
\begin{cases}
R_{0i} & \text{ if } j=i,\\
\kappa_{ji} \alpha_{j\rightarrow i} R_{0i} & \text{ if } j\neq i.
\end{cases}    
\end{equation}
\end{proposition}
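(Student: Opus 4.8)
The plan is to mirror the Next Generation Matrix computation carried out for $n=2$ in Section \ref{sec:2neurons}, now applied to the full $2n$-dimensional linearization, and then to reduce the resulting $2n\times 2n$ problem to the $n\times n$ matrix $F$. I write $\mathsf{F}$ and $\mathsf{V}$ for the $n\times n$ new-infection and transition matrices of the van den Driessche--Watmough construction, to avoid clashing with the proposition's $F$.

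First I would linearize \eqref{eqn:nneurons} at the DFE \eqref{DFE}. The crucial observation---already anticipated in the text---is that the only delayed quantity, $y_i(t-T_i)$, enters solely through $K_i\beta(\cdot)$, and since $\beta'(0)=0$ its contribution to the Jacobian vanishes at the DFE. Consequently the linearization carries no transcendental $e^{-\lambda T_i}$ term, the characteristic equation is an ordinary polynomial, and the delay does not move the threshold; this is precisely what legitimizes importing the (ODE-based) machinery. Ordering the variables as $(x_1,\dots,x_n,y_1,\dots,y_n)$, the Jacobian at the DFE is block upper triangular,
\[
J_{\mathrm{DFE}}=\begin{pmatrix} -\mathrm{diag}(\mu_i) & * \\ 0 & \mathsf{F}-\mathrm{diag}(\alpha_i)\end{pmatrix},
\]
where the nonzero $(2,2)$ block comes from differentiating the infection terms $dx_i\big(y_i+\sum_{j\neq i}\kappa_{ji}\alpha_{j\rightarrow i}y_j\big)$ and the loss terms $\alpha_i y_i$ in the $y$-equations, evaluated at $x_i=K_i/\mu_i$.

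Second, I would designate the $y_i$ as the infected compartments and split their dynamics into new-infection and transition parts, obtaining $\mathsf{V}=\mathrm{diag}(\alpha_i)$ and $\mathsf{F}$ with $\mathsf{F}_{ii}=dK_i/\mu_i$, $\mathsf{F}_{ij}=d(K_i/\mu_i)\kappa_{ji}\alpha_{j\rightarrow i}$ for $j\neq i$. Here $\mathsf{F}$ has nonnegative entries and $\mathsf{V}$ is invertible precisely under the standing assumption $\alpha_i>0$ for all $i$, which must be stated explicitly. Because $J_{\mathrm{DFE}}$ is block triangular and the $2n\times 2n$ new-infection matrix is supported only on the lower-right $n\times n$ block, the spectral radius of the full next generation matrix reduces to that of the $n\times n$ product, so $\mathcal{R}_0=\rho(\mathsf{F}\mathsf{V}^{-1})$. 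Third---and this is the step requiring care---I would reconcile $\mathsf{F}\mathsf{V}^{-1}$ with the matrix $F$ as stated. A direct computation gives $(\mathsf{V}^{-1}\mathsf{F})_{ii}=dK_i/(\mu_i\alpha_i)=R_{0i}$ and $(\mathsf{V}^{-1}\mathsf{F})_{ij}=\kappa_{ji}\alpha_{j\rightarrow i}R_{0i}$ for $j\neq i$, that is, $\mathsf{V}^{-1}\mathsf{F}=F$ \emph{exactly}, whereas the canonical next generation matrix is $\mathsf{F}\mathsf{V}^{-1}$. The two products differ entrywise (by factors $\alpha_i/\alpha_j$ off the diagonal) but share the same spectrum, since $\mathsf{F}\mathsf{V}^{-1}=\mathsf{V}\big(\mathsf{V}^{-1}\mathsf{F}\big)\mathsf{V}^{-1}=\mathsf{V}F\mathsf{V}^{-1}$ is similar to $F$ (equivalently, $\rho(AB)=\rho(BA)$). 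Hence $\mathcal{R}_0=\rho(\mathsf{F}\mathsf{V}^{-1})=\rho(F)$. As a consistency check I would verify that for $n=2$ this $F$ has the same trace and determinant---hence the same eigenvalues \eqref{eq:lam_pm}---as the matrix displayed in Section \ref{sec:2neurons}.

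The main obstacle is conceptual rather than computational: one must justify that a threshold built on the ODE-type Next Generation Matrix genuinely governs the delayed system. The resolution rests entirely on $\beta'(0)=0$, which deletes every delayed term from the linearization at the DFE and renders the characteristic function a polynomial identical to the non-delayed case; this is the point I would emphasize. The remaining linear-algebra subtlety---that the stated $F$ is the reverse-order product $\mathsf{V}^{-1}\mathsf{F}$, and so is only similar to, not equal to, the canonical next generation matrix---is easy to gloss over but is exactly what makes the entries in \eqref{eq:matrixF} correct as written.
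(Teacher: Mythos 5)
Your proposal is correct and follows essentially the same route as the paper's proof: linearize \eqref{eqn:nneurons} at the DFE, use $\beta'(0)=0$ to eliminate every delayed term, decompose the $y$-block of $J_{\mathrm{DFE}}$ as $M_{22}-V_{22}$ with $V_{22}=\mathrm{diag}(\alpha_i)$, and reduce the $2n\times 2n$ next generation problem to the lower-right $n\times n$ block. Your third step is in fact a point the paper's own proof silently elides: the paper writes $\mathcal{R}_0=\rho(M_{22}V_{22}^{-1})=\rho(F)$ as though $M_{22}V_{22}^{-1}$ were the matrix $F$ of \eqref{eq:matrixF}, whereas entrywise $F=V_{22}^{-1}M_{22}$ (the off-diagonal entries of the two products differ by factors $\alpha_i/\alpha_j$), so the equality of spectral radii genuinely requires your similarity argument $M_{22}V_{22}^{-1}=V_{22}FV_{22}^{-1}$ (equivalently $\rho(AB)=\rho(BA)$), which makes your write-up the more rigorous of the two.
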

\begin{proof}
For ease of notation, we use $\text{diag}(\cdot)$ to indicate $\text{diag}(\cdot)_{1\leq i \leq n}$, since all the diagonal matrices we consider are of dimension $n \times n$.

We compute the Jacobian $J$ of \eqref{eqn:nneurons}, dropping the explicit dependence on $(t)$ everywhere for ease of notation. By ``splitting'' the system into $x$ and $y$, we can write
$$
J=\begin{pmatrix}
 J_{11} &   J_{12} \\
 J_{21} &   J_{22}
\end{pmatrix},
$$
where 
$$
J_{11}=\text{diag}\left(-\mu_i-d\left(y_i+ \sum_{j\neq i} \kappa_{ji} \alpha_{j\rightarrow i} y_j\right)\right), \quad (J_{12})_{ij}=
\begin{cases}
K_i \beta'(y_i)-dx_i & \text{ if } j=i,\\
-dx_i \kappa_{ji}\alpha_{j\rightarrow i} & \text{ if } j\neq i,
\end{cases}
$$
$$J_{21}=\text{diag}\left(d \left(y_i+\sum_{j\neq i} \kappa_{ji} \alpha_{j\rightarrow i} y_j\right)\right) \quad \text{and} \quad (J_{22})_{ij}=
\begin{cases}
dx_i -\alpha_i& \text{ if } j=i,\\
dx_i \kappa_{ji}\alpha_{j\rightarrow i} & \text{ if } j\neq i.
\end{cases}
$$
We now evaluate the Jacobian in the Disease Free Equilibrium of System \eqref{eqn:nneurons}, given in \eqref{DFE}.

Recall that $\beta'(0)=0$. We obtain
$$
J_{11,\text{DFE}}=\text{diag}\left(-\mu_i\right), \qquad J_{21,\text{DFE}}=0,
$$
$$
(J_{12,\text{DFE}})_{ij}=
\begin{cases}
-d\dfrac{K_i}{\mu_i} & \text{ if } j=i,\vspace{0.1cm}\\
-d\dfrac{K_i}{\mu_i} \kappa_{ji}\alpha_{j\rightarrow i} & \text{ if } j\neq i,
\end{cases} \quad \text{and} \quad (J_{22,\text{DFE}})_{ij}=
\begin{cases}
d\dfrac{K_i}{\mu_i} -\alpha_i& \text{ if } j=i,\vspace{0.1cm}\\
d\dfrac{K_i}{\mu_i} \kappa_{ji}\alpha_{j\rightarrow i} & \text{ if } j \neq i.
\end{cases}
$$
Finally, we decompose $J_{\text{DFE}}=M-V$, with
$$
M=\begin{pmatrix}
 0 &   0 \\
 0 &   M_{22}
\end{pmatrix} \quad \text{and} \quad 
V=\begin{pmatrix}
 V_{11} &   V_{12} \\
 0 &   V_{22}
\end{pmatrix},
$$
where
$$
(M_{22})_{ij}=
\begin{cases}
d\dfrac{K_i}{\mu_i} & \text{ if } j=i,\vspace{0.1cm}\\
d\dfrac{K_i}{\mu_i} \kappa_{ji}\alpha_{j\rightarrow i} & \text{ if } j\neq i,
\end{cases}
$$
and
$$
V_{22}=\text{diag}\left(\alpha_i\right).
$$
We do not write $V_{11}$ and $V_{12}$ explicitly, since they are not needed for our computations. Then, the basic reproduction number of the whole system is the spectral radius $\mathcal{R}_0=\rho(M_{22}V_{22}^{-1})=\rho(F)$, with $F\in \mathbb{R}^{n \times n}$ defined as
$$
(F)_{ij}=
\begin{cases}
R_{0i} & \text{ if } j=i,\\
\kappa_{ji} \alpha_{j\rightarrow i} R_{0i} & \text{ if } j\neq i.
\end{cases}
$$
\end{proof}
Notice that, in order to compute this spectral radius, we implicitly assumed that $\alpha_i \neq 0$ for all $i=1,2,\dots,n$. This means that we assume that each neuron receives some infection from \emph{at least} one of its neighbours. We comment more on this in Section \ref{sec:numer}. We derived the Basic Reproduction Number of System \eqref{eqn:nneurons} similarly to how we proceeded on page 2 for the 2 neurons case. Here, the influence of the various $\kappa_{ji}$ is less obvious, and we shall investigate it numerically, except for the case $\mathcal{R}_0<1$, for which we analytically prove global convergence towards the Disease Free Equilibrium \eqref{DFE} in Section \ref{sec:DFE}.

\subsection{Fully homogeneous case}\label{FHC}

Recall that we are interested in the number of neurons $n\geq 2$, so the divisions we make in this section by $n-1$ are not problematic. Assume now that the system is fully homogeneous, and that all the neurons are connected to each other. This is clearly an unrealistic setting, however, it is instructive to obtain an intuition of what the role of $n$, the number of neurons, is in the spread of the prion.

Full homogeneity in this setting means that in the system \eqref{eqn:nneurons} the parameters are $K_i=K$, $\mu_i=\mu$ for all $i=1,\dots,n$, and $\kappa_{ij}=\kappa$, 
$\alpha_{i \to j}=\alpha/(n-1)$ for all $i,j=1,\dots, n$. Then, $\alpha_i=\alpha$; moreover, for each neuron the local Basic Reproduction Number is 
$$R_{0i}=\frac{dK}{\mu \alpha}=:R_0,$$ 
and the matrix $F$ defining the global Basic Reproduction Number $\mathcal{R}_0$ is given by
\begin{equation}\label{eq:fullyhom}
(F)_{ij}=
\begin{cases}
R_0 & \text{ if } j=i\\
\dfrac{\kappa\alpha}{n-1} R_0 & \text{ if } j\neq i
\end{cases}=R_{0}
\begin{cases}
1 & \text{ if } j=i\\
\dfrac{\kappa\alpha}{n-1} & \text{ if } j\neq i
\end{cases}=
R_{0}\left( \left( 1 - \dfrac{\kappa\alpha}{n-1} \right) I_n + \dfrac{\kappa\alpha}{n-1} 1_n  \right),    
\end{equation}
where $I_n$ is the $n\times n$ identity matrix, and $1_n$ is the $n\times n$ matrix with $1$ in all its entries. 

Then, $1_n$ has one eigenvalue $n$ (its trace) and $n-1$ zero eigenvalues (since it has rank 1), whereas the matrix
$$
\left( 1 - \dfrac{\kappa\alpha}{n-1}\right) I_n,
$$
clearly has $n$ eigenvalues equal to $1 - \kappa\alpha/(n-1)$. 

Recall that, if a matrix $A$ has eigenvalues $\lambda_1, \dots,\lambda_n$, then the matrix $cI_n+bA$ has eigenvalues $c+b\lambda_1, \dots,c+b\lambda_n$, for any $b,c\in\mathbb{R}$, since any eigenvector $v$ of $A$ will also satisfy $cIv=cv$.

Hence, the sum \eqref{eq:fullyhom} (ignoring for a moment the scalar coefficient $R_{0}$ in front of the brackets) has one eigenvalue equal to $\kappa\alpha +1$ (its spectral radius) and $n-1$ eigenvalues equal to $1 - \kappa\alpha/(n-1)$. Consequently,
\begin{equation}\label{eq:rho}
\mathcal{R}_0=\rho(F)=R_{0}\left(\kappa\alpha+1\right).
\end{equation}
This value is clearly strictly greater than $R_0$, and independent on $n$. This means that, as long as the network is fully connected and fully homogeneous, the number of neurons has no direct impact on the dynamics of the system, according to our model. 

\subsection{Case of one-way direction}
Let us consider here the case where neuron $i$ is only connected to its
neighbour $i+1$ (respectively $i-1$), and only this one. This leads then to the following expressions,
\begin{equation*}
(F)_{ij}=
R_0\begin{cases}
1 & \text{ if } j=i,\\
\dfrac{\kappa\alpha}{n-1} & \text{ if } j < i, \\
0 & \text{ otherwise}.
\end{cases} \quad \text{or} \quad (F)_{ij}=
R_0\begin{cases}
1 & \text{ if } j=i,\\
\dfrac{\kappa\alpha}{n-1} & \text{ if } j > i, \\
0 & \text{ otherwise}.
\end{cases}
\end{equation*}
In this case, we obtain 
\[\mathcal{R}_0=\rho(F)= R_{0}. \]
The only parameters involved in the expression \eqref{eq:matrixF} are the local Basic Reproduction Number $R_{0i}$, the diffusion coefficient $\alpha_{j\rightarrow i}$ and the interaction parameter $\kappa_{ji}$. Neurons can be grouped into collections of neurons, for which the corresponding parameters $R_{0i}$, $\alpha_{j\rightarrow i}$ and $\kappa_{ji}$ can be determined. We then consider each collection of neurons as a single neuron and again apply the previously established approach. 

\section{Global stability of the Disease Free Equilibrium}\label{sec:DFE}

In this section, we prove the global stability of the Disease Free Equilibrium \eqref{DFE} for System \eqref{eqn:nneurons} when $\mathcal{R}_0=\rho(F)<1$. In order to do so, we proceed similarly to \cite[Thm. 5]{ottaviano2022global}.

\begin{theorem}\label{DFEstab}
The Disease Free Equilibrium \eqref{DFE} of System \eqref{eqn:nneurons} is globally asymptotically stable when $\mathcal{R}_0=\rho(F)<1$.
\end{theorem}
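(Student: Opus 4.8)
The plan is to follow the Lyapunov-function approach of \cite[Thm.~5]{ottaviano2022global}, exploiting the crucial structural feature that the delayed term $y_i(t-T_i)$ appears \emph{only} in the $x$-equations: the subsystem governing the ``infected'' variables $Y:=(y_1,\dots,y_n)^\top$ is delay-free, and (because $\beta'(0)=0$) the delay also disappears from the linearization at the DFE \eqref{DFE}, so local stability will follow from a genuine ODE linearization. This lets me treat the $y$-dynamics by a pure ODE comparison argument, with each $x_i(t)$ entering only as a time-varying, bounded coefficient. First I would record an a priori bound: since $\beta(y)\le\beta(0)=1$ and the loss term $dx_i(\cdots)$ is nonnegative on the positive cone, each $x$-equation satisfies $\dot x_i \le K_i-\mu_i x_i$, whence $\limsup_{t\to\infty}x_i(t)\le K_i/\mu_i$ (Proposition~\ref{probound} already guarantees nonnegativity and boundedness).

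Fixing $\varepsilon>0$, there is a time $t_\varepsilon$ with $x_i(t)\le K_i/\mu_i+\varepsilon$ for all $t\ge t_\varepsilon$. Substituting this bound into the $y$-equations and using nonnegativity of the $y_j$ gives, for $t\ge t_\varepsilon$, the componentwise differential inequality $\dot Y \le (M_{22}^{\varepsilon}-V_{22})\,Y$, where $M_{22}^{\varepsilon}$ is the matrix $M_{22}$ from the proof of Proposition~\ref{prop:BRN} with each $K_i/\mu_i$ replaced by $K_i/\mu_i+\varepsilon$, and $V_{22}=\mathrm{diag}(\alpha_i)$. The matrix $A_\varepsilon:=M_{22}^{\varepsilon}-V_{22}$ is Metzler (nonnegative off-diagonal entries). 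Since $\mathcal{R}_0=\rho(M_{22}V_{22}^{-1})<1$, by continuity of the spectral radius I may take $\varepsilon$ small enough that $\rho(M_{22}^{\varepsilon}V_{22}^{-1})<1$ as well; the standard equivalence for the next-generation decomposition \cite{VandenDriesscheWatmough} then gives that the spectral abscissa of $A_\varepsilon$ is negative, i.e. $-A_\varepsilon$ is a nonsingular M-matrix. Consequently there exists a strictly positive vector $w\gg 0$ with $A_\varepsilon^\top w \ll 0$ (componentwise).

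With such a $w$, I would use the linear Lyapunov functional $L(t):=w^\top Y(t)=\sum_i w_i y_i(t)\ge 0$. For $t\ge t_\varepsilon$ the inequality yields $\dot L \le w^\top A_\varepsilon Y = (A_\varepsilon^\top w)^\top Y$. Since every component of $A_\varepsilon^\top w$ is strictly negative and $Y\ge 0$, setting $\delta:=\min_i\big(-(A_\varepsilon^\top w)_i\big)>0$ gives $\dot L \le -\delta\sum_i y_i \le -\tfrac{\delta}{\max_i w_i}\,L$, so $L$ decays exponentially and $L(t)\to 0$. As $w\gg0$, this forces $y_i(t)\to 0$ for every $i$.

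It remains to recover $x_i(t)\to K_i/\mu_i$. Once $y_j(t)\to 0$ for all $j$, the continuity of $\beta$ gives $\beta(y_i(t-T_i))\to\beta(0)=1$, while the bounded nonlinear loss term $dx_i(\cdots)\to 0$; hence the right-hand side of the $x_i$-equation is asymptotic to $K_i-\mu_i x_i$, and a standard limiting-equation (asymptotically autonomous) argument yields $x_i(t)\to K_i/\mu_i$. Combining the two limits shows global attractivity of the DFE \eqref{DFE}, and together with local asymptotic stability (read off from $J_{\mathrm{DFE}}$, whose infected block $M_{22}-V_{22}$ is Hurwitz precisely when $\mathcal{R}_0<1$) this gives global asymptotic stability. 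I expect the main obstacle to be the rigorous passage from the asymptotic bound on $x_i(t)$ to the decay of $L$: one must control the time-varying coefficient $x_i(t)$ uniformly (the $\varepsilon$-perturbation together with continuity of $\rho$ handles this) and then carefully invoke the limiting-equation theory for the \emph{delayed} $x$-equations in the final step, since that is the only place where the delay genuinely re-enters the analysis.
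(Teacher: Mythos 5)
Your proposal is correct, and its overall skeleton is the same as the paper's: bound $x_i$ from above by $K_i/\mu_i+\varepsilon$, derive the linear differential inequality $\dot{Y}\le (M_{22}(\varepsilon)-V_{22})Y$ for the infected variables, use continuity of the spectral radius together with Lemma \ref{eigenlemma} (Lemma 2 of \cite{van2008further}) to make the perturbed matrix Hurwitz, conclude $y_i(t)\to 0$, and finally recover $x_i(t)\to K_i/\mu_i$. You differ in two technical implementations, both sound. First, where the paper deduces $y_i\to 0$ by comparison with the auxiliary cooperative linear system $\dot{w}=(M_{22}(\varepsilon)-V_{22})w$ — a step that implicitly relies on the comparison (Kamke) principle for Metzler systems — you instead extract a strictly positive vector $w\gg 0$ with $(M_{22}(\varepsilon)-V_{22})^\top w\ll 0$ from the nonsingular M-matrix property and run the linear Lyapunov functional $L=w^\top Y$; this makes the comparison step explicit and self-contained, at the price of quoting the M-matrix characterization. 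Second, for the $x$-limit the paper performs an explicit two-sided squeeze (lower bound $K_i\beta(\delta)\big/\big(\mu_i+\delta d(1+\sum_{j\ne i}\kappa_{ji}\alpha_{j\to i})\big)$, upper bound $K_i/\mu_i+\varepsilon$, then $\varepsilon,\delta\to 0$), which is more elementary than your appeal to asymptotically autonomous/limiting-equation theory; that said, for the scalar equation $\dot{x}_i=K_i-\mu_i x_i+o(1)$ your step is easily justified by variation of constants, so nothing fails. You also explicitly verify local stability via the delay-free linearization at the DFE (using $\beta'(0)=0$ and the block-triangular structure of $J_{\text{DFE}}$), a point the paper's proof leaves implicit, so your write-up is in this respect slightly more complete.
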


\begin{proof}
Recall from \eqref{eqn:bbeta} that $\beta(x)\leq 1$ for all $x\geq 0$. Then, we can bound the first $n$ DDEs of System \eqref{eqn:nneurons} from above by
$$
\frac{\mathrm{d}x_i}{\mathrm{d}t} {}={} K_i\beta(y_i(t-T_i)) - \mu_i x_i(t) - d x_i\left(y_i(t)+ \sum_{j\neq i} \kappa_{ji} \alpha_{j\rightarrow i} y_j(t)\right)\leq K_i-\mu_i x_i(t).
$$
Consider the auxiliary system
\begin{equation}\label{aux1}
    \frac{\mathrm{d}z_i}{\mathrm{d}t} {}=K_i-\mu_i z_i(t), \quad i=1,2,\dots,n.
\end{equation}
Clearly, the first $n$ entries of Disease Free Equilibrium \eqref{DFE} form a point which is globally asymptotically stable for \eqref{aux1}. Then, for any $\varepsilon>0$, there exists a $\bar{t}_i>0$ such that, for $t\geq \bar{t}_i$,
$$
x_i(t)\leq \dfrac{K_i}{\mu_i}+\varepsilon.
$$
Take $\bar{t}=\max_i \bar{t}_i$. Then, for $t\geq \bar{t}$, the second $n$ ODEs of the system \eqref{eqn:nneurons} can be bound from above by
$$
\frac{\mathrm{d}y_i}{\mathrm{d}t} {}\leq{}  d \bigg( \dfrac{K_i}{\mu_i}+\varepsilon \bigg)\left(y_i(t)+ \sum_{j\neq i} \kappa_{ji} \alpha_{j\rightarrow i} y_j(t)\right) - \alpha_i y_i(t).
$$
Consider the second auxiliary system 
$$
\frac{\mathrm{d}w_i}{\mathrm{d}t} {}={}  d \bigg( \dfrac{K_i}{\mu_i}+\varepsilon \bigg)\left(w_i(t)+ \sum_{j\neq i} \kappa_{ji} \alpha_{j\rightarrow i} w_j(t)\right) - \alpha_i w_i(t).
$$
This system is linear in $w=(w_1,w_2,\dots,w_n)$, and can be rewritten as 
$$
\frac{\mathrm{d}w}{\mathrm{d}t} {}=(M_{22}(\varepsilon)-V_{22})w,
$$
where
$$
(M_{22}(\varepsilon))_{ij}=
\begin{cases}
d\bigg( \dfrac{K_i}{\mu_i}+\varepsilon \bigg) & \text{ if } j=i,\vspace{0.1cm}\\
d\bigg( \dfrac{K_i}{\mu_i}+\varepsilon \bigg) \kappa_{ji}\alpha_{j\rightarrow i} & \text{ if } j\neq i,
\end{cases}
$$
meaning the matrix $M_{22}$ used in the definition of $\mathcal{R}_0$ is actually $M_{22}(0)$, and
$$
V_{22}=\text{diag}\left(\alpha_i \right),
$$
as above.
For $\varepsilon>0$ small enough, as a consequence of our assumption $\mathcal{R}_0<1$, we can have $\rho(M_{22}(\varepsilon)V_{22}^{-1})<1$. We then use the following lemma:
\begin{lemma}[\cite{van2008further}, Lemma 2]\label{eigenlemma}
If $M$ is non-negative and $V$ is a non-singular M-matrix, then $\mathcal{R}_0=\rho(MV^{-1})<1$
if and only if all eigenvalues of $(M-V)$ have negative real parts.
\end{lemma}
This means that, if $\rho(M_{22}(\varepsilon)V_{22}^{-1})<1$, then 
$$\lim_{t\rightarrow +\infty}w_i(t)=0$$
for all $i=1,2,\dots,n$, which implies $$\lim_{t\rightarrow +\infty}y_i(t)=0.$$
Thus, for any $\delta>0$, there exists $t^*>0$ such that, for all $t\geq t^*$ and for all $i=1,2,\dots,n$, we have $y_i(t)\leq \delta$. Hence, introducing for ease of notation $T=\max_i T_i$, for $t\geq t^*+T$, we have
$$
\beta(y_i(t-T_i))=\dfrac{1}{1+(y_i(t-T_i)/y_c)^p}\geq \dfrac{1}{1+(\delta/y_c)^p}=\beta(\delta).
$$
Notice that $\beta(\delta) \rightarrow 1$ as $\delta \rightarrow 0$. 
We can then bound the first $n$ DDEs of System \eqref{eqn:nneurons} from below by
$$
\frac{\mathrm{d}x_i}{\mathrm{d}t} {}\geq {} K_i\beta(\delta) - \mu_i x_i(t) - d x_i(t)\left(\delta+ \sum_{j\neq i} \kappa_{ji} \alpha_{j\rightarrow i} \delta\right).
$$
Consider the final auxiliary system
$$
\frac{\mathrm{d}v_i}{\mathrm{d}t} {}= {} K_i\beta(\delta) - \mu_i v_i(t) - d v_i(t)\left(\delta+ \sum_{j\neq i} \kappa_{ji} \alpha_{j\rightarrow i} \delta\right).
$$
Clearly, each for each $i$ we have
$$
\lim_{t \rightarrow +\infty} v_i(t)=\dfrac{K_i \beta(\delta)}{\mu_i+\delta d(1+ \sum_{j\neq i} \kappa_{ji} \alpha_{j\rightarrow i} )}.
$$
Hence, for each $i=1,2,\dots,n$ and for all $\varepsilon,\delta>0$, we have the following lower and upper bounds: 
$$
\dfrac{K_i \beta(\delta)}{\mu_i+\delta d(1+ \sum_{j\neq i} \kappa_{ji} \alpha_{j\rightarrow i} )}\leq \liminf_{t\rightarrow+\infty} x_i(t)\leq \limsup_{t\rightarrow+\infty} x_i(t)\leq \dfrac{K_i}{\mu_i}+\varepsilon.
$$ 

Letting $\varepsilon,\delta\rightarrow 0$ concludes the proof.

\end{proof}

\begin{corollary}
    The Disease Free Equilibrium is locally unstable when $\mathcal{R}_0>1$.
\end{corollary}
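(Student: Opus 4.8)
The plan is to show that, when $\mathcal{R}_0=\rho(F)>1$, the linearization of System \eqref{eqn:nneurons} at the Disease Free Equilibrium \eqref{DFE} possesses a characteristic root with strictly positive real part, so that the equilibrium cannot be stable. The first—and conceptually decisive—observation is that, although \eqref{eqn:nneurons} is a system of DDEs, its linearization at the DFE is effectively delay-free, exactly as foreshadowed in the remark preceding Proposition \ref{prop:BRN}. Writing $x_i=K_i/\mu_i+\xi_i$ and $y_i=\eta_i$, the only delayed term $K_i\beta(y_i(t-T_i))$ contributes $K_i\beta'(0)\eta_i(t-T_i)$ to the linearized equation, and this vanishes since $\beta'(0)=0$. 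Consequently the characteristic equation carries no transcendental factor $e^{-\lambda T_i}$ and reduces to $\det(\lambda I_{2n}-J_{\text{DFE}})=0$: the characteristic roots are precisely the finitely many eigenvalues of $J_{\text{DFE}}=M-V$ computed in the proof of Proposition \ref{prop:BRN}.

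Next I would analyze the spectrum of $J_{\text{DFE}}=M-V$ via M-matrix theory. Since $M\geq 0$ and the off-diagonal entries of $V$ are non-positive (as $V$ is an M-matrix), $J_{\text{DFE}}$ is essentially non-negative (Metzler), so by Perron--Frobenius theory its spectral abscissa is attained at a real eigenvalue. Here I would invoke the companion of Lemma \ref{eigenlemma}: for $M\geq 0$ and $V$ a non-singular M-matrix, the sign of the spectral abscissa of $M-V$ matches the sign of $\rho(MV^{-1})-1$ (see \cite[Theorem 2]{VandenDriesscheWatmough} and \cite{van2008further}). As noted after Proposition \ref{prop:BRN}, the block structure of $M$ and $V$ yields $\rho(MV^{-1})=\rho(M_{22}V_{22}^{-1})=\rho(F)=\mathcal{R}_0$. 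Hence $\mathcal{R}_0>1$ forces the spectral abscissa of $J_{\text{DFE}}$ to be strictly positive, i.e. $J_{\text{DFE}}$ has at least one eigenvalue with positive real part.

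Finally, I would conclude through the principle of linearized instability. Since the characteristic roots of \eqref{eqn:nneurons} at the DFE coincide with the eigenvalues of $J_{\text{DFE}}$, and at least one of these lies in the open right half-plane, the Disease Free Equilibrium is locally unstable; this follows from the standard linearization theory for functional differential equations (\cite{Hale1993S,Smith2011SNY}), which in this delay-free-at-equilibrium situation collapses to the classical ODE result.

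The main obstacle is not computational but one of careful justification. One must argue cleanly that the delay genuinely disappears from the linearization—so that the infinite-dimensional DDE stability problem reduces to the finite spectrum of $J_{\text{DFE}}$—and one must use the $\,>1\,$ half of the M-matrix dichotomy, which is \emph{not} contained in the stated Lemma \ref{eigenlemma} (that lemma only supplies the $\,<1\,$ direction) and therefore requires the stronger companion statement from \cite{VandenDriesscheWatmough}.
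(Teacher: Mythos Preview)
Your argument is correct and follows the same underlying route as the paper: invoke the $\mathcal{R}_0>1$ half of the van den Driessche--Watmough dichotomy (what the paper cites as \cite[Thm.~1]{van2008further}) to obtain a characteristic root with positive real part. The paper's proof is a one-line citation; you unpack it, and in doing so you supply something the paper leaves implicit---the justification that the linearization at the DFE is genuinely delay-free because $\beta'(0)=0$, so that the ODE result of \cite{van2008further} applies verbatim. You also correctly flag that Lemma~\ref{eigenlemma} as stated only covers the $<1$ direction and that one needs the companion statement. In short: same approach, but your version is more careful about the DDE-to-ODE reduction, which the paper's bare citation does not address.
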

\begin{proof}
Direct consequence of Theorem \ref{DFEstab} and \cite[Thm. 1]{van2008further}.
\end{proof}
We remark that, for all our results thus far, the only assumptions on the function $\beta(\cdot)$ are: $\beta(0)=1$, $\beta'(0)=0$ and $\beta(x)$ decreasing in $x$. Our specific choice \eqref{eqn:bbeta} was made for consistency with \cite{adimy2022neuron} and because it appears biologically relevant. However, other choices might lead to interesting results. We comment more on this in Section \ref{sec:concl}.

\section{Existence of an endemic equilibrium}\label{sec:endemeq}

We now prove, under stronger assumptions than $\mathcal{R}_0>1$ (but weaker than $R_{0i}>1$), that System \eqref{eqn:nneurons} admits at least one Endemic Equilibrium (EE), i.e. an equilibrium such that $y_i > 0$ for all $i$.

\begin{theorem}\label{thm:endeq}
Assume that the matrix $F$ \eqref{eq:matrixF} is such that the minimum row sum is strictly bigger than 1. Then, System \eqref{eqn:nneurons} admits at least one Endemic Equilibrium.
\end{theorem}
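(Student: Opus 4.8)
The plan is to characterise the endemic equilibria as the strictly positive fixed points of an explicit nonlinear self-map of the positive cone, and then to detect a nontrivial fixed point by a degree / fixed-point-index argument, using the row-sum hypothesis on $F$ to guarantee that the map expands near the origin. First I would reduce the equilibrium problem. At any equilibrium the delayed argument satisfies $y_i(t-T_i)=y_i$, so the system loses its delay; setting the right-hand sides to zero gives, for each $i$,
\[
K_i\beta(y_i)=x_i\big(\mu_i+\phi_i(y)\big),\qquad \alpha_i y_i = x_i\,\phi_i(y),\qquad \phi_i(y):=d\Big(y_i+\textstyle\sum_{j\neq i}\kappa_{ji}\alpha_{j\rightarrow i}y_j\Big).
\]
The first relation yields $x_i=K_i\beta(y_i)/(\mu_i+\phi_i(y))>0$; substituting into the second shows that $(x,y)$ is an equilibrium if and only if $y$ solves $y=T(y)$, where
\[
T_i(y):=\frac{K_i\,\phi_i(y)\,\beta(y_i)}{\alpha_i\big(\mu_i+\phi_i(y)\big)}.
\]
Endemic equilibria then correspond \emph{exactly} to fixed points of $T$ in the open positive cone, with $x$ recovered (and automatically positive) from the formula above.

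Next I would record the structure of $T$. It is continuous on $\mathbb{R}^n_{\geq 0}$ (no singularity at the origin, since $\mu_i>0$), and since $\beta\leq 1$ and $\phi_i/(\mu_i+\phi_i)<1$ it maps the whole cone into the compact convex box $B:=\prod_i[0,K_i/\alpha_i]$; in particular $T(0)=0$. Differentiating at the origin and using $\beta(0)=1$, $\beta'(0)=0$, a short computation gives $DT(0)=F$, the matrix of Proposition \ref{prop:BRN}. As $F$ is nonnegative, its spectral radius dominates its minimum row sum, so the hypothesis forces $\rho(DT(0))=\rho(F)>1$.

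Then comes the topological core. Working with the fixed-point index on the cone $K=\mathbb{R}^n_{\geq 0}$, on one hand $T$ sends $K$ into the bounded set $B$, so for $R$ large the index over the ball $K_R$ equals $1$. On the other hand $T$ expands near $0$: the row-sum hypothesis makes $c\mathbf{1}$ a strict subsolution, $T(c\mathbf{1})\gg c\mathbf{1}$ for small $c$, and testing the equation $y=T(y)+s\mathbf{1}$ ($s\geq 0$) against a nonnegative left eigenvector of $F$ for $\rho(F)>1$ rules out solutions on a small sphere; by the standard expansion lemma the index over a small ball $K_r$ is $0$. Additivity/excision then yields a fixed point $y^{*}\in B$ with $\|y^{*}\|\geq r$, hence $y^{*}\neq 0$. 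Finally I would upgrade $y^{*}\geq 0,\ y^{*}\neq 0$ to $y^{*}\gg 0$: if $y^{*}_k=0$ then $T_k(y^{*})=0$ forces $\phi_k(y^{*})=0$, i.e.\ $y^{*}_j=0$ for every $j$ feeding into $k$, and irreducibility of the coupling matrix $(\kappa_{ji}\alpha_{j\rightarrow i})$ propagates this to $y^{*}=0$, a contradiction.

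The main obstacle is twofold. The delicate analytic point is the index computation at the origin: passing from $\rho(DT(0))>1$ to index $0$ requires the nonlinear-eigenvalue machinery for maps on cones, with some care because $F$ need not be irreducible, so its Perron eigenvector may lie on the boundary of $K$. The second, easily overlooked, point is the strict positivity in the last step: the row-sum hypothesis by itself does not exclude ``partially endemic'' boundary fixed points, so one genuinely needs a connectivity (irreducibility) assumption on the network to conclude $y^{*}\gg 0$; I would make that assumption explicit, or else argue within the strongly connected component supporting $y^{*}$.
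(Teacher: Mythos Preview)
Your reduction to a fixed-point problem $y=T(y)$ with $DT(0)=F$ is correct, and the observation that the row-sum hypothesis yields $T(c\mathbf 1)\gg c\mathbf 1$ for small $c>0$ is exactly the computation the paper makes. But from that point on the two arguments diverge, and your route carries a real cost.

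The paper does \emph{not} use cone index theory. It writes the equilibrium condition as $G_i(y)=0$ with
\[
G_i(y)=\frac{dK_i\beta(y_i)\,\phi_i(y)}{\mu_i+\phi_i(y)}-\alpha_i y_i,
\]
observes that $G_i<0$ on the face $y_i=M$ (uniformly in the other coordinates, since the first term is bounded by $K_i\beta(y_i)\to 0$), and that $G_i>0$ on the face $y_i=\varepsilon$: your subsolution inequality gives $G_i(\varepsilon\mathbf 1)>0$, and because $\phi_i$ is nondecreasing in $y_j$ for $j\neq i$ while $s\mapsto s/(\mu_i+s)$ is increasing, $G_i$ is nondecreasing in the off-diagonal variables, so the sign persists on the whole face. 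Poincar\'e--Miranda on the box $[\varepsilon,M]^n$ then produces a zero with every $y_i\geq\varepsilon>0$. Strict positivity comes for free; no connectivity of the network is used.

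Your index argument, by contrast, only delivers a nontrivial fixed point in the cone, and you yourself flag that upgrading $y^*\geq 0$, $y^*\neq 0$ to $y^*\gg 0$ needs irreducibility of $(\kappa_{ji}\alpha_{j\to i})$. That hypothesis is not in the statement, and it is genuinely not implied by the row-sum condition: in a line network $1\to 2\to\cdots\to n$ the matrix is reducible yet all row sums can exceed $1$. Boundary fixed points with some $y_i^*=0$ really can exist in such configurations (take an upstream block with zero initial infection), so the index count on an annulus in the full cone does not, by itself, locate an interior fixed point. ``Arguing within a strongly connected component'' does not rescue this either, because restricting the map to a component discards the incoming forcing terms that the row-sum hypothesis counts.

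The gap is easily closed with the ingredients you already have: replace the cone by the box $[\varepsilon,M]^n$ and use either Poincar\'e--Miranda as above, or simply Brouwer on that box after noting that the off-diagonal monotonicity makes the box forward-invariant for a suitable homotopy. Either way the heavy index machinery and the irreducibility assumption become unnecessary.
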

\begin{proof}
We know that
$$
\min \text{ row/column sum of }F \leq \rho(F) \leq \max \text{ row/column sum of }F,
$$
hence under our assumption, $\rho(F)=\mathcal{R}_0>1$.

We begin by noticing that an equilibrium of the system \eqref{eqn:nneurons} necessarily satisfies
\begin{equation}
    \label{eq:proof1}
x_i=\dfrac{K_i\beta(y_i)}{\mu_i + d \left(y_i+ \sum_{j\neq i} \kappa_{ji} \alpha_{j\rightarrow i} y_j\right)}.
\end{equation}
Substituting \eqref{eq:proof1} in the ODEs for $y_i$ and equating them to 0, we obtain 
\begin{equation}
    \label{eq:proof2}
0 =  \dfrac{dK_i\beta(y_i) \left(y_i+ \sum_{j\neq i} \kappa_{ji} \alpha_{j\rightarrow i} y_j\right)} {\mu_i + d \left(y_i+ \sum_{j\neq i} \kappa_{ji} \alpha_{j\rightarrow i} y_j\right)}- \alpha_i  y_i. 
\end{equation}
Notice that, for $y_i$ large enough, the right hand side (RHS) of \eqref{eq:proof2} is clearly negative. Let us denote with $M_i$ a large number such that 
$$
\left(  \dfrac{dK_i\beta(y_i) \left(y_i+ \sum_{j\neq i} \kappa_{ji} \alpha_{j\rightarrow i} y_j\right)} {\mu_i + d \left(y_i+ \sum_{j\neq i} \kappa_{ji} \alpha_{j\rightarrow i} y_j\right)}- \alpha_i  y_i \right)\bigg|_{y_i=M_i}<0,
$$
for all non-negative values of $y_j$, $j\neq i$. Moreover, let us denote with $M=\max M_i$.

If we find a value $\varepsilon>0$ such that the RHS of \eqref{eq:proof2} is positive for all $i=1,2,\dots,n$, we can apply the Poincaré-Miranda theorem \cite{kulpa1997poincare,mawhin2019simple} (qualitatively, a higher dimensional version of the intermediate value theorem) to conclude the existence of \emph{at least} one Endemic Equilibrium of System \eqref{eqn:nneurons}. 

Let us evaluate the RHS of \eqref{eq:proof2} at $y_i=\varepsilon$ for all $i=1,2,\dots,n$, and study its sign. We have
$$
\dfrac{dK_i\beta(\varepsilon) \left(\varepsilon+ \varepsilon\sum_{j\neq i} \kappa_{ji} \alpha_{j\rightarrow i} \right)} {\mu_i + d \left(\varepsilon+ \varepsilon\sum_{j\neq i} \kappa_{ji} \alpha_{j\rightarrow i}\right)}- \alpha_i \varepsilon>0. 
$$
In fact, we can divide by $\varepsilon>0$ on both sides, obtaining  
\begin{equation}
    \label{eq:proof3}
\dfrac{dK_i\beta(\varepsilon) \left(1+ \sum_{j\neq i} \kappa_{ji} \alpha_{j\rightarrow i} \right)} {\mu_i + d \left(\varepsilon+ \varepsilon\sum_{j\neq i} \kappa_{ji} \alpha_{j\rightarrow i}\right)}- \alpha_i >0. 
\end{equation}
Recall that $\beta(0)=1$. Then, for $\varepsilon=0$, \eqref{eq:proof3} coincides with the $i$-th row sum of $F$ being strictly greater than $1$. Since by assumption the minimum of the row sums (hence, all the row sums) is greater than $1$, by continuity there exists a small $\varepsilon_i>0$ such that the RHS of \eqref{eq:proof2} is strictly positive. Let us denote with $\varepsilon=\min \varepsilon_i$.

Applying the Poincaré-Miranda theorem on the set $[\varepsilon,M]^n$ allows us to conclude the existence of at least one Endemic Equilibrium, i.e. with $0<\varepsilon<y_i<M$ for $i=1,2,\dots,n$.
\end{proof}

We conjecture the following, based on our extensive numerical simulations:
\begin{conjecture}
  System \eqref{eqn:nneurons} admits at least one Endemic Equilibrium when $\mathcal{R}_0>1$. 
\end{conjecture}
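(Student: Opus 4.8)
The plan is to upgrade the Poincar\'e--Miranda argument of Theorem \ref{thm:endeq} into a fixed-point index computation on the nonnegative cone, which is sensitive to $\rho(F)=\mathcal{R}_0$ rather than to the individual row sums of $F$. Since the delay disappears at any equilibrium (there $y_i(t-T_i)=y_i$), the equilibria of the DDE \eqref{eqn:nneurons} coincide with those of the underlying algebraic system, and by \eqref{eq:proof1} the $x$-components are slaved to $y$. It therefore suffices to find a strictly positive zero $y\in\mathbb{R}^n_{>0}$ of the reduced map $G$ in \eqref{eq:proof2}. I would recast $G_i(y)=0$ as the fixed-point equation $y=T(y)$ on the cone $P=\mathbb{R}^n_{\ge 0}$, with
$$
T_i(y)=\frac{dK_i\,\beta(y_i)\,S_i(y)}{\alpha_i(\mu_i+d\,S_i(y))},\qquad S_i(y):=y_i+\sum_{j\neq i}\kappa_{ji}\alpha_{j\rightarrow i}y_j .
$$
Because $\beta\le 1$ one has $T_i(y)\le K_i/\alpha_i$, so $T$ is a continuous self-map of the box $B=\prod_i[0,K_i/\alpha_i]$, and $T(0)=0$ recovers the Disease Free Equilibrium.

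The key computation is the linearisation at the origin. Using $\beta(0)=1$, $\beta'(0)=0$ and $S_i(0)=0$, a direct differentiation gives $DT(0)=F$, the very matrix \eqref{eq:matrixF} whose spectral radius is $\mathcal{R}_0$. Since $F$ is nonnegative, Perron--Frobenius furnishes a nonnegative eigenvector for the eigenvalue $\rho(F)=\mathcal{R}_0>1$. This is exactly the input needed for the cone expansion/compression form of the fixed-point index (Krasnoselskii, in the ordered-Banach-space formulation of Amann): when the derivative $DT(0)=F$ at the trivial fixed point has spectral radius strictly greater than $1$ and $1$ is not an eigenvalue with eigenvector in $P$, the index of $T$ on a small cone-neighbourhood of $0$ is $0$, whereas the self-map bound $T(B\cap P)\subseteq B\cap P$ makes the index of $T$ over the whole box equal to $1$. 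By additivity, $T$ must then have a fixed point $y^*\neq 0$ in $(B\setminus\{0\})\cap P$, i.e. a nontrivial equilibrium.

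It remains to promote this nontrivial fixed point to a genuine Endemic Equilibrium, namely $y^*_i>0$ for every $i$. Here I would invoke irreducibility of $F$ (equivalently, strong connectivity of the neuron coupling graph): if $y^*_{j_0}>0$ for some $j_0$, then $S_i(y^*)>0$ for every $i$ reachable from $j_0$, forcing $y^*_i=T_i(y^*)>0$, and irreducibility propagates positivity to all coordinates; this hypothesis also guarantees that $1$ is not a Perron eigenvalue of $F$, closing the index step above. The main obstacle is precisely the reducible case, which I expect to be where the conjecture as literally stated is delicate: if some neuron neither self-sustains ($R_{0i}>1$) nor receives infection from a persistent cluster, its equilibrium value is pinned at $y_i=0$, so a fully positive equilibrium need not exist even when $\mathcal{R}_0>1$. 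I therefore anticipate that the clean statement requires a connectivity (irreducibility) assumption on $F$, under which the index argument should close. As an alternative route avoiding the explicit index computation, one can deduce an interior equilibrium from the uniform persistence established in Section \ref{Section 5} together with the dissipativity of Proposition \ref{probound}, via the standard theorem asserting that a point-dissipative, uniformly persistent, eventually compact semiflow possesses a coexistence steady state.
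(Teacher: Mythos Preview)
The paper does \emph{not} prove this statement: it is explicitly labelled a conjecture, supported only by numerical evidence, and the authors note that their Poincar\'e--Miranda argument for Theorem~\ref{thm:endeq} breaks down when one merely assumes $\rho(F)>1$. There is therefore nothing to compare your attempt against; you are proposing a proof where the paper offers none.

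On the merits of your argument: the cone fixed-point index route is a sound and standard strategy, and the key computation $DT(0)=F$ is correct (using $\beta(0)=1$, $\beta'(0)=0$, and $S_i(0)=0$, the derivative of $s\mapsto s/(\mu_i+ds)$ at $0$ contributes the factor $1/\mu_i$, yielding exactly the entries of $F$). The bound $T_i(y)\le K_i/\alpha_i$ is also right, so the index over a large ball in the cone equals $1$, while $\rho(DT(0))>1$ with a nonnegative Perron eigenvector forces index $0$ near the origin; additivity then produces a nontrivial fixed point. Your alternative via the uniform persistence of Section~\ref{Section 5} plus dissipativity (Proposition~\ref{probound}) and a Hale--Waltman/Zhao type coexistence theorem is equally viable, and arguably closer in spirit to what the paper already has in hand.

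The genuine gap you yourself flag is the right one: both routes need irreducibility of $F$ (equivalently, strong connectivity of the coupling graph). Without it, the Perron eigenvector need not be strictly positive, the index step can degenerate, and---more fundamentally---a neuron that neither satisfies $R_{0i}>1$ nor receives input from any persistent component is pinned at $y_i=0$, so a \emph{fully} positive equilibrium need not exist even when $\mathcal{R}_0>1$. Note that the paper's own persistence proof (end of Proposition~\ref{llimsupu}) already tacitly uses such a connectivity hypothesis when it propagates positivity from $y_{\tilde i}$ to the other components. Your reading is therefore correct: the conjecture as literally stated likely requires an irreducibility assumption, and under that assumption your outline should close.
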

Our proof of Theorem \ref{thm:endeq} relies heavily on the assumption on the minimum row sum being strictly bigger than 1, hence it fails so for a generic matrix $F$, if we only assume $\rho(F)>1$. However, the application of the Poincaré-Miranda theorem might not be necessary to prove this result.

	\section{Persistence of solutions}\label{Section 5}

In this section, we treat the long-term behavior of the system \eqref{eqn:nneurons} when $\mathcal{ R}_0 >1$. We start with the following proposition:
\begin{proposition}
	Consider a fixed $i=\tilde{i}\in \{ 1,\dots, n\}$. If $R_{0\tilde{i}}=d K_{\tilde{i}}/(\mu_{\tilde{i}} \alpha_{\tilde{i}})> 1$, then $\mathcal{ R}_0 > 1$ and there exists a constant $\varepsilon_{\tilde{i}} > 0$ such that 
	\[
	\limsup_{t \to +\infty} y_{\tilde{i}}(t) > \varepsilon_{\tilde{i}}, \qquad \text{with} \quad\varphi_{\tilde{i}} \in   C([-T,0], \mathbb{R}_+), \quad \varphi_{\tilde{i}}(0)\neq0.
	\]
\end{proposition}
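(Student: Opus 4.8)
The plan is to establish the two assertions in turn. For the implication $R_{0\tilde{i}}>1\Rightarrow\mathcal{R}_0>1$, I would use that the matrix $F$ in \eqref{eq:matrixF} is entrywise nonnegative with diagonal entries exactly $R_{0i}$. Since $F$ dominates entrywise the diagonal matrix $\text{diag}(R_{0i})$, and the spectral radius is monotone on nonnegative matrices, we get $\mathcal{R}_0=\rho(F)\geq\rho(\text{diag}(R_{0i}))=\max_i R_{0i}\geq R_{0\tilde{i}}>1$ at once.

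For the persistence statement, the strategy is a proof by contradiction built on differential inequalities and comparison arguments, in the same spirit as Theorem \ref{DFEstab}. First I would record strict positivity: $\varphi_{\tilde{i}}(0)\neq0$ together with nonnegativity of all variables forces $y_{\tilde{i}}(t)>0$ for every $t\geq0$, since discarding the nonnegative production term in the $y_{\tilde{i}}$-equation gives $\dot{y}_{\tilde{i}}\geq-\alpha_{\tilde{i}}y_{\tilde{i}}$, whence $y_{\tilde{i}}(t)\geq y_{\tilde{i}}(0)e^{-\alpha_{\tilde{i}}t}>0$. I would then suppose, aiming at a contradiction, that $\limsup_{t\to\infty}y_{\tilde{i}}(t)\leq\varepsilon$ for a small $\varepsilon>0$ to be fixed, so that $y_{\tilde{i}}(t)$ and $y_{\tilde{i}}(t-T_{\tilde{i}})$ are eventually below $\varepsilon$.

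The crucial observation — and the step that makes the coupling tractable — is that adding the $x_{\tilde{i}}$- and $y_{\tilde{i}}$-equations cancels the interaction term $dx_{\tilde{i}}(y_{\tilde{i}}+\sum_{j\neq\tilde{i}}\kappa_{j\tilde{i}}\alpha_{j\rightarrow\tilde{i}}y_j)$ exactly, leaving
$$
\frac{\mathrm{d}(x_{\tilde{i}}+y_{\tilde{i}})}{\mathrm{d}t}=K_{\tilde{i}}\beta(y_{\tilde{i}}(t-T_{\tilde{i}}))-\mu_{\tilde{i}}x_{\tilde{i}}(t)-\alpha_{\tilde{i}}y_{\tilde{i}}(t).
$$
This is precisely what lets me avoid controlling the (a priori uncontrolled) neighbouring $y_j$. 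Using $\beta(y_{\tilde{i}}(t-T_{\tilde{i}}))\geq\beta(\varepsilon)$, $x_{\tilde{i}}\leq x_{\tilde{i}}+y_{\tilde{i}}$ and $y_{\tilde{i}}\leq\varepsilon$, I would bound the right-hand side below by $K_{\tilde{i}}\beta(\varepsilon)-\mu_{\tilde{i}}(x_{\tilde{i}}+y_{\tilde{i}})-\alpha_{\tilde{i}}\varepsilon$, and a comparison with the associated scalar linear ODE yields $\liminf_{t\to\infty}(x_{\tilde{i}}+y_{\tilde{i}})\geq(K_{\tilde{i}}\beta(\varepsilon)-\alpha_{\tilde{i}}\varepsilon)/\mu_{\tilde{i}}$. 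Subtracting $y_{\tilde{i}}\leq\varepsilon$ gives $\liminf_{t\to\infty}x_{\tilde{i}}(t)\geq(K_{\tilde{i}}\beta(\varepsilon)-\alpha_{\tilde{i}}\varepsilon)/\mu_{\tilde{i}}-\varepsilon$, whose limit as $\varepsilon\to0$ is $K_{\tilde{i}}/\mu_{\tilde{i}}$.

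To close the argument I would invoke the hypothesis $R_{0\tilde{i}}>1$, i.e. $K_{\tilde{i}}/\mu_{\tilde{i}}>\alpha_{\tilde{i}}/d$, to fix $\varepsilon=\varepsilon_{\tilde{i}}$ so small that the lower bound above exceeds $\alpha_{\tilde{i}}/d$ by a positive margin; then eventually $dx_{\tilde{i}}(t)-\alpha_{\tilde{i}}\geq\eta>0$. Feeding this into the lower bound $\dot{y}_{\tilde{i}}\geq(dx_{\tilde{i}}-\alpha_{\tilde{i}})y_{\tilde{i}}$ (obtained by discarding the nonnegative neighbour contributions) forces $y_{\tilde{i}}(t)$ to grow at least exponentially, since $y_{\tilde{i}}>0$, contradicting $\limsup y_{\tilde{i}}\leq\varepsilon_{\tilde{i}}$. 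I expect the main obstacle to be exactly the coupling terms $\sum_{j\neq\tilde{i}}\kappa_{j\tilde{i}}\alpha_{j\rightarrow\tilde{i}}y_j$, over which no a priori bound is available; the cancellation in the $x_{\tilde{i}}+y_{\tilde{i}}$ equation is what removes it, and care is then only needed in making the threshold $\varepsilon_{\tilde{i}}$ depend on the parameters alone (so the conclusion is uniform in the initial data) and in justifying the eventual-time comparison steps for the delayed argument of $\beta$.
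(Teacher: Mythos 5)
Your proof is correct, and for the persistence claim it follows essentially the same strategy as the paper: argue by contradiction, exploit the exact cancellation of the interaction term in the equation for $x_{\tilde i}+y_{\tilde i}$, deduce that $\liminf_{t\to+\infty}x_{\tilde i}(t)$ is close to $K_{\tilde i}/\mu_{\tilde i}$, and then use $R_{0\tilde i}>1$ to force exponential growth of $y_{\tilde i}$, contradicting boundedness and the contradiction hypothesis. The technical implementation differs: where you use comparison with scalar linear ODEs, the paper invokes a fluctuation-lemma argument (a sequence $t_k\to+\infty$ along which $x_{\tilde i}(t_k)\to\liminf_{t\to+\infty} x_{\tilde i}(t)$ and the relevant derivatives vanish, so that the sum equation evaluated at $t_k$ yields $x_{\tilde i\infty}\geq K_{\tilde i}/\mu_{\tilde i}$). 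These two devices are interchangeable here, and your version has the merit of making explicit both the strict positivity $y_{\tilde i}(t)\geq y_{\tilde i}(0)e^{-\alpha_{\tilde i}t}>0$ (which the paper uses only implicitly when it asserts unbounded growth) and the fact that $\varepsilon_{\tilde i}$ is chosen from the parameters alone, so the bound is uniform over admissible initial data. The genuinely different step is the implication $R_{0\tilde i}>1\Rightarrow\mathcal{R}_0>1$: you obtain it algebraically from Perron--Frobenius monotonicity, $\rho(F)\geq\rho\left(\text{diag}(R_{0i})\right)=\max_i R_{0i}\geq R_{0\tilde i}$, since $F$ is nonnegative and dominates its diagonal part entrywise; the paper instead argues dynamically via Theorem \ref{DFEstab} (if $\mathcal{R}_0<1$, then every $y_i$ tends to $0$, contradicting the persistence of $y_{\tilde i}$). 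Your route is preferable on this point: the dynamical argument, as stated, only excludes $\mathcal{R}_0<1$ and hence literally gives $\mathcal{R}_0\geq 1$, whereas the matrix-monotonicity argument yields the strict inequality $\mathcal{R}_0>1$ directly.
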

\begin{proof}
	By considering the second equation of System \eqref{eqn:nneurons} for $i=\tilde{i}$, we have, for $t>0$,
		\begin{equation*}
	\begin{split}
	\frac{\mathrm{d}y_{\tilde{i}}}{\mathrm{d}t} {}\geq{}&  d x_{\tilde{i}}(t) y_{\tilde{i}}(t) -  \alpha_{i}   y_{\tilde{i}}(t).
	\end{split}
	\end{equation*}
	We have also, for $t>0$,
	\begin{equation*}
	\begin{split}
	\frac{\mathrm{d}(x_{\tilde{i}}+y_{\tilde{i}})}{\mathrm{d}t} {}={}& K_{\tilde{i}}\beta(y_{\tilde{i}}(t-T_1)) - \mu_{\tilde{i}} x_{\tilde{i}}(t)-\alpha_{\tilde{i}} y_{\tilde{i}}(t).
	\end{split}
	\end{equation*}
	We suppose by contradiction that
	$\limsup_{t \to +\infty} y_{\tilde{i}}(t)  \leq \varepsilon_{\tilde{i}}$, for any small $\varepsilon_{\tilde{i}}>0$. By the boundedness of solutions, we consider $\liminf_{t\rightarrow+\infty} x_{\tilde{i}}(t)=x_{\tilde{i}\infty}$ and $\liminf_{t\rightarrow+\infty} y_{\tilde{i}}(t)=y_{\tilde{i}\infty}=0$. Then, there exists a sequence $t_k\to +\infty$ as $k\to +\infty$, such that $x_{\tilde{i}}(t_k)\rightarrow x_{\tilde{i}\infty}$, $y_{\tilde{i}}(t_k)\rightarrow 0$, $y_{\tilde{i}}'(t_k)\rightarrow 0$ and $x_{\tilde{i}}'(t_k)\rightarrow 0$. This yields to
	\[
	0\geq K_{\tilde{i}}\beta(0) -\mu_{\tilde{i}}  x_{\tilde{i}\infty}\Rightarrow x_{\tilde{i}\infty}\geq \dfrac{K_{\tilde{i}}\beta(0)}{\mu_{\tilde{i}}}=\dfrac{K_{\tilde{i}}}{\mu_{\tilde{i}}}.
	\]
	For a very large time $t$, the ODE of $y_{\tilde{i}}$ then satisfies
		\begin{equation*}
	\begin{split}
	\frac{\mathrm{d}y_{\tilde{i}}}{\mathrm{d}t} {}\geq{}&   \dfrac{d K_{\tilde{i}}}{\mu_{\tilde{i}}} y_{\tilde{i}}(t) -  \alpha_{i}   y_{\tilde{i}}(t).
	\end{split}
	\end{equation*}
	By using $R_{0\tilde{i}}=d K_{\tilde{i}}/(\mu_{\tilde{i}} \alpha_{\tilde{i}})> 1$, then $\lim_{t\rightarrow+\infty} y_{\tilde{i}}(t)=y_{\tilde{i}\infty}=+\infty$, which contradicts the hypothesis and clashes with the results derived earlier on the boundedness of solutions. 
    
    Clearly, if $R_{0\tilde{i}}> 1$, then $\mathcal{ R}_0 > 1$. This is a consequence of  Theorem \ref{DFEstab}: if $\mathcal{ R}_0 < 1$, then the solution approach zero in every $y_i$ component, which is not the case for $i=\tilde{i}$.
\end{proof}
Next, we show the weak persistence of each $y_i$, $i=1,...,n$, in the following proposition.
\begin{proposition}\label{llimsupu}
	Suppose that $\mathcal{ R}_0 > 1$. Then, there exists a constant $\varepsilon > 0$ such that, for any initial condition $(x_{i0},\varphi_i) \in   \mathbb{R}_+ \times C([-T,0], \mathbb{R}_+)  $, for $i=1,...,n$, we have 
	\[
	\limsup_{t \to +\infty} y_i(t) > \varepsilon, \quad \varphi_{i}(0)\neq0.
	\]
\end{proposition}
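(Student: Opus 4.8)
The plan is to argue by contradiction, combining a lower comparison estimate on the $x_i$ with the spectral characterization already exploited in Theorem~\ref{DFEstab}. Suppose the claim fails in the worst way, namely that along some solution all components become eventually small: assume that for an arbitrarily small $\varepsilon>0$ one has $\limsup_{t\to+\infty}y_i(t)\leq\varepsilon$ for every $i=1,\dots,n$. Then, for any $\delta>0$ there is a time past which $y_j(t)\leq\varepsilon+\delta$ for all $j$, so the $x_i$-equations of \eqref{eqn:nneurons} can be bounded from below exactly as in the final auxiliary system of the proof of Theorem~\ref{DFEstab}. Comparison then yields
$$
\liminf_{t\to+\infty} x_i(t) \;\geq\; \frac{K_i\,\beta(\varepsilon+\delta)}{\mu_i + d(\varepsilon+\delta)\!\left(1+\sum_{j\neq i}\kappa_{ji}\alpha_{j\rightarrow i}\right)},
$$
and since $\beta(0)=1$ the right-hand side tends to $K_i/\mu_i$ as $\varepsilon,\delta\to0$. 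Hence there exist a small $\eta>0$ (with $\eta\to0$ as $\varepsilon\to0$) and a time $t_0$ after which $x_i(t)\geq K_i/\mu_i-\eta$ for all $i$.

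Next I would feed this lower bound back into the $y_i$-equations to produce a cooperative linear differential inequality. For $t\geq t_0$,
$$
\frac{\mathrm{d}y_i}{\mathrm{d}t} \;\geq\; d\!\left(\frac{K_i}{\mu_i}-\eta\right)\!\left(y_i(t)+\sum_{j\neq i}\kappa_{ji}\alpha_{j\rightarrow i}\,y_j(t)\right)-\alpha_i\,y_i(t),
$$
that is $\dot y \geq (M_{22}(-\eta)-V_{22})\,y$, where $M_{22}(-\eta)$ is the matrix $M_{22}$ of Proposition~\ref{prop:BRN} with each $K_i/\mu_i$ replaced by $K_i/\mu_i-\eta$, and $V_{22}=\mathrm{diag}(\alpha_i)$. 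The matrix $A(\eta):=M_{22}(-\eta)-V_{22}$ is Metzler, since its off-diagonal entries $d(K_i/\mu_i-\eta)\kappa_{ji}\alpha_{j\rightarrow i}$ are non-negative, so the comparison principle for cooperative systems gives $y(t)\geq w(t)$ for $t\geq t_0$, where $w$ solves $\dot w=A(\eta)w$ with $w(t_0)=y(t_0)\geq0$ and $w(t_0)\neq0$ (using $\varphi_i(0)\neq0$ together with the nonnegativity in Proposition~\ref{probound}).

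The spectral input is the engine of the argument. Since $\mathcal{R}_0=\rho(M_{22}(0)V_{22}^{-1})>1$, continuity of the spectral radius gives $\rho(M_{22}(-\eta)V_{22}^{-1})>1$ for $\eta$ small enough, and Lemma~\ref{eigenlemma} then forces $A(\eta)$ to have an eigenvalue with positive real part; being Metzler, its spectral abscissa $s(A(\eta))>0$ is itself a real eigenvalue with a non-negative Perron eigenvector. Consequently $w(t)$, and hence $y(t)$, grows without bound, contradicting $\limsup_{t\to+\infty}y_i(t)\leq\varepsilon$. Crucially, the admissible $\eta$, and therefore the threshold $\varepsilon$, depends only on the model parameters through the size of the perturbation of $M_{22}$ that keeps $\rho(\cdot)>1$, so the resulting bound is uniform in the initial data.

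The main obstacle I anticipate is twofold and lives in this last step. First, the contradiction above cleanly shows only that the components cannot \emph{all} be simultaneously small, i.e. that $\limsup_{t\to+\infty}\max_i y_i(t)>\varepsilon$; upgrading this to persistence of \emph{each} $y_i$ above the same $\varepsilon$ requires the Perron eigenvector of $A(\eta)$ to be strictly positive, which amounts to $A(\eta)$ being irreducible, i.e. the neuron network being strongly connected. For reducible networks one must instead propagate persistence along the connectivity structure, bootstrapping from the source components handled by the previous proposition (those with $R_{0i}>1$) to the downstream ones through the strictly positive feeding terms $d\,x_i\,\kappa_{ji}\alpha_{j\rightarrow i}\,y_j$. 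Second, making $\varepsilon$ genuinely uniform is the delicate point: the comparison scheme above yields weak persistence ($\limsup y_i>0$) for each fixed solution almost immediately, and the uniformity has to be read off from the fact that the contradiction is driven by the parameter-only spectral gap $s(A(\eta))>0$, rather than by anything depending on the chosen trajectory.
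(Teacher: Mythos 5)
Your core argument is essentially the same as the paper's: argue by contradiction, obtain a lower bound on the $x_i$ that approaches $K_i/\mu_i$ as the assumed bound on the $y_i$ shrinks, feed it into the $y$-equations to get a linear cooperative comparison system $\dot w=(M_{22}(\cdot)-V_{22})w$, and invoke continuity of the spectral radius plus Lemma~\ref{eigenlemma} to produce an eigenvalue with positive real part, contradicting the assumed smallness of all $y_i$. The only technical difference in this part is that the paper derives the lower bound on $\liminf_{t\to+\infty}x_i(t)$ via the fluctuation lemma (Lemma A.14 of \cite{SmithThieme2011}) rather than your two-parameter comparison; both work. If anything, your write-up is more explicit than the paper's about why the comparison solution actually grows (Metzler structure, Perron eigenvector), a point the paper passes over silently.

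The genuine gap is the step you yourself flag and then do not execute: the contradiction argument only shows that the components cannot \emph{all} remain below $\varepsilon$, i.e. $\limsup_{t\to+\infty}\max_i y_i(t)>\varepsilon$, whereas the proposition claims $\limsup_{t\to+\infty}y_i(t)>\varepsilon$ for \emph{every} $i$. You sketch two repairs (irreducibility of $A(\eta)$, or propagation along the network) but carry out neither, so the stated claim is not proved. The paper closes this step with a concrete argument: suppose $y_i(t)\to 0$ for some $i\neq\tilde{i}$ while $\limsup_{t\to+\infty}y_{\tilde{i}}(t)=y_{\tilde{i}\infty}>\varepsilon$; boundedness of solutions and Barbalat's lemma give $y_i'(t)\to 0$, and evaluating the $y_i$-equation along a sequence $t_m$ with $y_{\tilde{i}}(t_m)\to y_{\tilde{i}\infty}$ yields $0\geq d\,x_{i\infty}\,\kappa_{\tilde{i}i}\,\alpha_{\tilde{i}\rightarrow i}\,y_{\tilde{i}\infty}>0$, a contradiction. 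Note that this closure relies on exactly the positivity of the coupling $\kappa_{\tilde{i}i}\alpha_{\tilde{i}\rightarrow i}$ that you identified as the strong-connectivity requirement (for a reducible network one would have to iterate along a directed path), so your diagnosis of the obstacle is accurate; what is missing from your proposal is performing this Barbalat-type (or iterated propagation) argument to finish the proof.
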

\begin{proof}
	We suppose by contradiction that
	$\limsup_{t \to +\infty} y_i(t)  \leq \varepsilon$, for $i=1,\dots,n$ and for any small $\varepsilon>0$. Then, there exists a sufficiently large $t_{1\varepsilon}>0$ such that $y_i(t) \leq \varepsilon$, for all $t \geq t_{1\varepsilon}$. Hence, we get for all $t \geq t_{1\varepsilon}$, 
 	\begin{equation*}
		\begin{array}{llll}
		\dfrac{\mathrm{d}x_i}{\mathrm{d}t} \geq K_i\beta(\varepsilon) - \mu_i x_i(t) - d x_i\left(\varepsilon+ \varepsilon\sum_{j\neq i} \kappa_{ji} \alpha_{j\rightarrow i} \right).
		\end{array}%
	\end{equation*}
	We denote $\liminf_{t\rightarrow+\infty} x_i(t)=x_{i\infty}$, for $i=1,\dots,n$. Then, there exists a sequence $t_m\rightarrow+\infty$ as $m\rightarrow+\infty$, such that $x_i(t_m)\rightarrow x_{i\infty}$ and $x_i'(t_m)\rightarrow 0$ (see Lemma A.14 of \cite{SmithThieme2011}). This yields
	\[
	0 \geq K_i\beta(\varepsilon) - \mu_i x_{i\infty} - d x_{i\infty}\left(\varepsilon+ \varepsilon\sum_{j\neq i} \kappa_{ji} \alpha_{j\rightarrow i} \right).
	\]
	Then, we have
	\[
	x_{i\infty}\geq \dfrac{K_i\beta(\varepsilon) }{\mu_i x_{i\infty} + d \left(\varepsilon+ \varepsilon\sum_{j\neq i} \kappa_{ji} \alpha_{j\rightarrow i} \right)}=:x_{i\varepsilon}.
	\]
Hence, for every small $\nu>0$, there exists a sufficiently large $t_{2\nu}>0$ such that, for $t\geq t_{2\nu}$,
 \[
	x_{i}(t)\geq x_{i\varepsilon}-\nu=:x_{i\varepsilon}^\nu.
	\]
 Then, for a significant large time, we get
 \[
	\frac{\mathrm{d}y_i}{\mathrm{d}t} \geq  d x_{i\varepsilon}^\nu \left(y_i(t)+ \sum_{j\neq i} \kappa_{ji} \alpha_{j\rightarrow i} y_j(t)\right) -  \alpha_{i}   y_i(t).
 \]
 As in the proof of Theorem \ref{DFEstab}, we consider the following system of ODEs:
$$
\frac{\mathrm{d}w_i}{\mathrm{d}t} {}={}  d x_{i\varepsilon}^\nu \left(w_i(t)+ \sum_{j\neq i} \kappa_{ji} \alpha_{j\rightarrow i} w_j(t)\right) - \alpha_i w_i(t).
$$
This system is linear in $w=(w_1,w_2,\dots,w_n)$, and can be rewritten as 
$$
\frac{\mathrm{d}w}{\mathrm{d}t} {}=(M_{22}(\varepsilon,\nu)-V_{22})w,
$$
where
$$
(M_{22}(\varepsilon,\nu))_{ij}=
\begin{cases}
d x_{i\varepsilon}^\nu & \text{ if } j=i,\\
d x_{i\varepsilon}^\nu\kappa_{ji}\alpha_{j\rightarrow i} & \text{ if } j\neq i,
\end{cases} \qquad V_{22}=\text{diag}\left( \alpha_i \right).
$$
Using the hypothesis that $\mathcal{ R}_0=\rho(M_{22}(0,0)V_{22}^{-1}) > 1$, we can consider $\varepsilon$ and $\nu$ sufficiently small such that
	\begin{equation*} \label{condBis}
		\mathcal{R}_0^{\varepsilon,\nu}:=\rho(M_{22}(\varepsilon,\nu)V_{22}^{-1})>1.
	\end{equation*}
We then use Lemma \ref{eigenlemma} to conclude that at least one eigenvalue of $(M_{22}(\varepsilon,\nu)-V_{22}$ has positive real part. This leads to a contradiction with the assumption $\limsup_{t \to +\infty} y_i(t)  \leq \varepsilon$, for all $i=1,...,n$. As a consequence, there exists at least one $i=\Tilde{i}$ such that
$$
\limsup_{t \to +\infty} y_{\Tilde{i}}(t)=y_{\Tilde{i}\infty}  > \varepsilon.
$$
This is sufficient to conclude the weak persistence for each $i\in\{1,\dots,n\}$. 

Suppose this is not true, and for some $i\in\{1,\dots,n\}$ and $i\neq \Tilde{i}$ we have $\limsup_{t\rightarrow+\infty} y_i(t)=0$. This means since solutions remain non-negative, that
$$\lim_{t\rightarrow+\infty} y_i(t)=0.$$ 
By the boundedness of solutions and as a consequence of Barbalat's Lemma \cite{khalil2002nonlinear,sun2023gathering}, we obtain
$$\lim_{t\rightarrow+\infty} y'_i(t)=0.$$
We can then choose a sequence $t_m\rightarrow+\infty$ as $m\rightarrow+\infty$, such that $y_{\Tilde{i}}(t_m)\rightarrow y_{\Tilde{i}\infty}$. Recall the equation of $y_i$, for $t>0$,
\[
\frac{\mathrm{d}y_i}{\mathrm{d}t} =  d x_i(t) \left(y_i(t)+ \sum_{j\neq i} \kappa_{ji} \alpha_{j\rightarrow i} y_j(t)\right) - \alpha_i y_i(t).
\]
Therefore, by letting $t_m \rightarrow +\infty$ we obtain
\[
0 \geq  d x_{i\infty}  \kappa_{i\Tilde{i}} \alpha_{\Tilde{i}\rightarrow i} y_{\Tilde{i}\infty}>0.
\]
This contradiction completes the proof. 
\end{proof}
%
%
%
Using the boundedness of the solution (see Proposition \ref{probound}) and the fact that $\beta$ is nonincreasing, we can show easily the following result.
\begin{proposition}\label{llimsupux}
	There exists a constant $\Tilde{\varepsilon} > 0$ such that, for any initial condition $(x_{i0},\varphi_i) \in   \mathbb{R}_+ \times C([-T,0], \mathbb{R}_+)  $, for $i=1,...,n$, we have 
	\[
	\liminf_{t \to +\infty} x_i(t) > \Tilde{\varepsilon}.
	\]
\end{proposition}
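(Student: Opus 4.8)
The plan is to bound each $x_i$ from below by exploiting the boundedness of the $y_i$ established in Proposition \ref{probound} together with the monotonicity of $\beta$. Since all solutions are bounded, there is some $Y>0$ with $\limsup_{t\to+\infty} y_i(t)\le Y$ for every $i$; hence for any $\eta>0$ there is a time $t_\eta$ beyond which $y_i(t)\le Y+\eta$ for all $i$ simultaneously. The key observation is that $\beta$ is nonincreasing, so $\beta(y_i(t-T_i))\ge \beta(Y+\eta)>0$ for $t\ge t_\eta+T$, and the linear-in-$x_i$ sink term in the first equation of \eqref{eqn:nneurons} can be bounded: because $x_i$ and the $y_j$ are bounded, the quantity $d\left(y_i(t)+\sum_{j\neq i}\kappa_{ji}\alpha_{j\to i}y_j(t)\right)$ is dominated by some constant $c_i>0$.

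First I would fix such an $\eta$ and derive, for $t\ge t_\eta+T$, the differential inequality
\[
\frac{\mathrm{d}x_i}{\mathrm{d}t} \ge K_i\beta(Y+\eta) - \mu_i x_i(t) - d\,c_i\, x_i(t),
\]
which is a scalar linear comparison equation with a strictly positive forcing term. Then I would introduce the auxiliary system $\mathrm{d}v_i/\mathrm{d}t = K_i\beta(Y+\eta) - (\mu_i + d\,c_i)\,v_i(t)$, exactly in the spirit of the comparison arguments used in the proof of Theorem \ref{DFEstab}. This scalar ODE has the globally attracting equilibrium
\[
v_i^* = \frac{K_i\beta(Y+\eta)}{\mu_i + d\,c_i}>0,
\]
so by a standard comparison principle for cooperative/monotone one-dimensional dynamics we obtain $\liminf_{t\to+\infty} x_i(t)\ge v_i^*>0$.

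Setting $\tilde\varepsilon := \tfrac12\min_i v_i^*>0$ then yields the claim uniformly in $i$. The step requiring the most care is making the sink coefficient $c_i$ genuinely uniform and independent of the initial data: the bound on $y_i$ comes from the $\limsup$ estimate in Proposition \ref{probound}, which gives $\limsup_{t\to+\infty}(x_i+y_i)\le K_i\beta(0)/\min\{\mu_i,\alpha_i\}$, a constant depending only on the parameters; from this one extracts a uniform $Y$ and hence a uniform $c_i$, so the resulting $\tilde\varepsilon$ does not depend on the chosen trajectory. The remaining subtlety is purely technical: one invokes the comparison theorem for DDEs (the forcing involves the delayed term $\beta(y_i(t-T_i))$, but since that term is bounded below by a constant for large $t$, the comparison reduces to the ODE above), which is routine given the monotonicity structure already used repeatedly in the excerpt.
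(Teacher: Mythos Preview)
Your argument is correct and follows precisely the route the paper sketches: the paper does not give a detailed proof, only the one-line remark ``Using the boundedness of the solution (see Proposition \ref{probound}) and the fact that $\beta$ is nonincreasing, we can show easily the following result,'' and your proposal fleshes out exactly this idea via the comparison ODE $v_i'=K_i\beta(Y+\eta)-(\mu_i+dc_i)v_i$. The only minor slips are notational (you write that $d(\cdot)$ is bounded by $c_i$ but then insert an extra factor $d$ in the inequality, and the boundedness of $x_i$ is irrelevant for controlling the sink coefficient), but these do not affect the substance.
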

Now, we can establish the following result stating the strong uniform persistence of System \eqref{eqn:nneurons} when $\mathcal{ R}_0 > 1$.
\begin{theorem}\label{}
	Suppose that $\mathcal{ R}_0 > 1$. Then, there exists a constant $\varepsilon > 0$ such that, for any initial condition $(x_{i0},\varphi_i) \in   \mathbb{R}_+ \times C([-T,0], \mathbb{R}_+)  $, for $i=1,...,n$, we have 
	\[
	\liminf_{t \to +\infty} y_i(t) > \varepsilon, \quad \varphi_{i}(0)\neq0.
	\]
\end{theorem}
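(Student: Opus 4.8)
The plan is to upgrade the weak uniform persistence already established in Proposition \ref{llimsupu} to strong uniform persistence by invoking the general theory of persistence for semiflows developed in \cite{SmithThieme2011}. First I would recast System \eqref{eqn:nneurons} as a continuous semiflow $\Phi$ on the phase space $X=(\mathbb{R}_+\times C^+)^n$, whose state at time $t$ is $(x(t),y_t)$, consistent with the initial data $(x_{i0},\varphi_i)\in\mathbb{R}_+\times C^+$. By Proposition \ref{probound} every solution is nonnegative and bounded, so $\Phi$ is point dissipative. Since the delay enters only through the terms $\beta(y_i(t-T_i))$, the solution operators $\Phi_t$ are compact for $t>T$ (the usual smoothing property of delay equations), so $\Phi$ is asymptotically smooth; combined with point dissipativity this yields a compact global attractor $A$ for $\Phi$.

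Next I would fix $i\in\{1,\dots,n\}$ and take as persistence function $\rho_i(x_0,\varphi)=\varphi_i(0)$, so that $\rho_i(\Phi_t u)=y_i(t)$. The set $X_{0}=\{u:\rho_i(u)>0\}$ is forward invariant, because dropping the nonnegative terms in the $y_i$-equation gives $\mathrm{d}y_i/\mathrm{d}t\geq-\alpha_i y_i$, whence $y_i(t)\geq y_i(0)e^{-\alpha_i t}>0$. Proposition \ref{llimsupu} provides exactly the uniform weak $\rho_i$-persistence estimate $\limsup_{t\to+\infty}\rho_i(\Phi_t u)>\varepsilon$ for every $u$ with $\rho_i(u)>0$. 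I would then apply the weak-to-strong uniform persistence theorem of \cite{SmithThieme2011}: uniform weak $\rho$-persistence together with a compact global attractor implies uniform strong $\rho$-persistence, so that $\liminf_{t\to+\infty}y_i(t)>\varepsilon_i$ for some $\varepsilon_i>0$. Here the only invariant set of $\Phi$ lying on the boundary face $\{\rho_i=0\}$ is the Disease Free Equilibrium \eqref{DFE}, which is locally unstable when $\mathcal{R}_0>1$ by the Corollary following Theorem \ref{DFEstab} and hence acts as a repeller on the boundary, so the acyclicity requirement underlying the theorem is trivially satisfied. Repeating this for each $i$ and setting $\varepsilon=\min_{1\leq i\leq n}\varepsilon_i$ delivers the uniform lower bound for all components simultaneously. (Alternatively one could run the same argument with the single persistence function $\rho=\min_i y_i$, but the component-wise route matches the form of Proposition \ref{llimsupu} more directly.)

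The main obstacle is the verification of the abstract hypotheses rather than any single computation: one must check carefully that $\Phi$ is asymptotically smooth, so that the compact attractor genuinely exists in this delay setting, and that the chosen persistence function interacts correctly with the boundary, i.e.\ that trajectories with $\varphi_i(0)\neq0$ indeed remain in $X_0$ and that the boundary dynamics are exhausted by the DFE. Proposition \ref{llimsupux}, which guarantees $\liminf_{t\to+\infty}x_i(t)>\Tilde{\varepsilon}$, is useful at this stage to localize the attractor away from the faces $\{x_i=0\}$ and to keep the comparison arguments uniform in the initial data. Once these structural facts are in place, the persistence theorem of \cite{SmithThieme2011} applies verbatim and the conclusion follows.
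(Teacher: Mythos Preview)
Your approach is essentially the same as the paper's: upgrade the uniform weak persistence of Proposition~\ref{llimsupu} to uniform strong persistence by invoking an abstract ``weak implies strong'' persistence theorem, using the dissipativity and compactness supplied by Proposition~\ref{probound}. The paper does not give details but simply cites Theorem~1 of \cite{FreedmanMoson1990PAMS} (and Theorem~7.3 of \cite{AdimyCheClau2020MBE}) for this step, whereas you invoke the Smith--Thieme framework \cite{SmithThieme2011}; these are different packagings of the same principle, and your outline is in fact more detailed than what the paper provides.

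One small caveat: your assertion that ``the only invariant set on the boundary face $\{\rho_i=0\}$ is the Disease Free Equilibrium'' need not hold for an arbitrary connectivity pattern (if no prions flow into neuron $i$, the face $\{y_i=0\}$ can carry nontrivial dynamics in the other coordinates). This does not, however, derail the argument: the weak-to-strong theorems you cite do not actually require identifying the boundary invariant set once you have a compact global attractor together with uniform weak $\rho$-persistence, so you can simply drop the acyclicity remark rather than justify it.
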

The proof can be adapted from the demonstration of Theorem 1 of \cite{FreedmanMoson1990PAMS} and it follows that the uniform weak persistence implies the uniform (strong) persistence (see also Theorem 7.3 of \cite{AdimyCheClau2020MBE}).

\section{Numerical simulations}\label{sec:numer}

In this section, we provide an extensive, but not exhaustive numerical exploration of the system \eqref{eqn:nneurons}, for various values of the parameters involved. Specifically, we simulate the model \eqref{eqn:nneurons} for the four choices of networks depicted in Fig. \ref{fig:networkss}: fully connected network with $n=3$ neurons; line networks with $n=5$ and $n=9$ neurons; and ring network with $n=5$ neurons. 
Our choice of initial conditions is not exactly based on biological aspects. Other choices of initial conditions can be explored, but as the analytical study has shown, the asymptotic behavior of the solution will remain unchanged, so no further simulation is required.

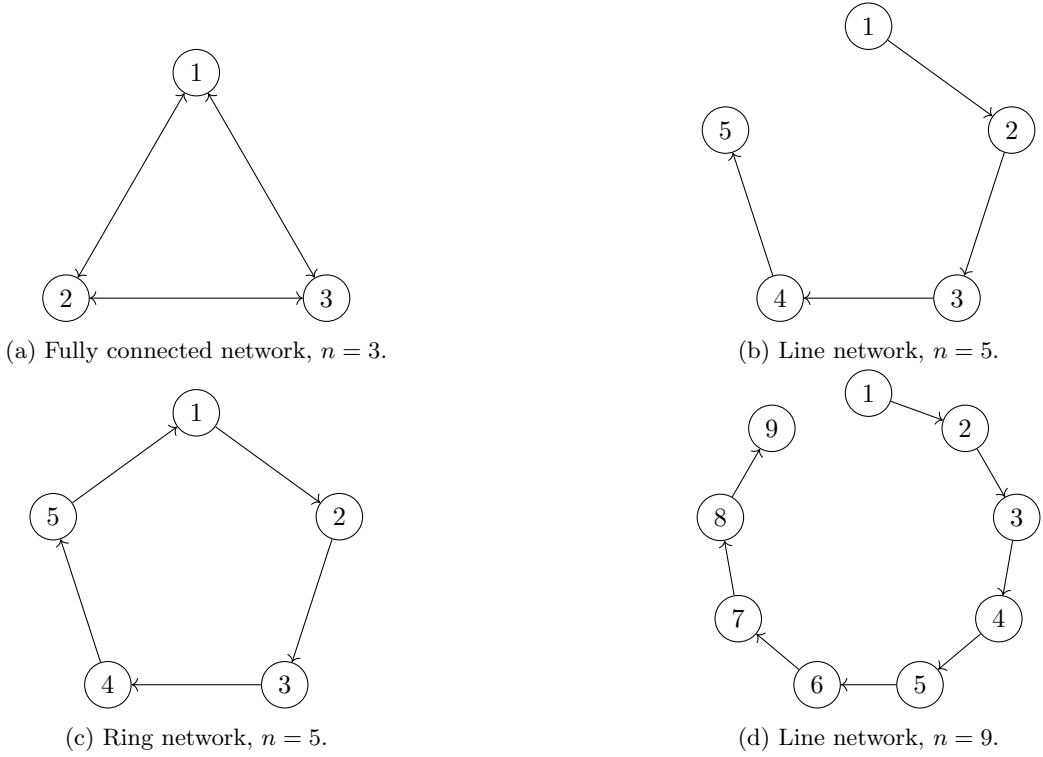
\begin{figure}[h]
    \centering
    \begin{subfigure}[t]{.49\textwidth}
      \centering

\begin{tikzpicture}
\tikzstyle{every node}=[draw,shape=circle];
\node (v1) at ( 90:2) {$1$};
\node (v2) at ( 210:2) {$2$};
\node (v3) at (330:2) {$3$};
\draw[<->] (v1) -- (v2);
\draw[<->] (v1) -- (v3);
\draw[<->] (v2) -- (v3);
\end{tikzpicture}
      \caption{Fully connected network, $n=3$.}
      \label{fig:networkssA}
\end{subfigure}
 \begin{subfigure}[t]{.49\textwidth}
      \centering

\begin{tikzpicture}
\tikzstyle{every node}=[draw,shape=circle];
\node (v1) at ( 90-72*0:2) {$1$};
\node (v2) at ( 90-72*1:2) {$2$};
\node (v3) at (90-72*2:2) {$3$};
\node (v4) at (90-72*3:2) {$4$};
\node (v5) at (90-72*4:2) {$5$};
\draw[->] (v1) -- (v2);
\draw[->] (v2) -- (v3);
\draw[->] (v3) -- (v4);
\draw[->] (v4) -- (v5);
\end{tikzpicture}
      \caption{Line network, $n=5$.}
      \label{fig:networkssB}
\end{subfigure}
 \begin{subfigure}[t]{.49\textwidth}
      \centering

\begin{tikzpicture}
\tikzstyle{every node}=[draw,shape=circle];
\node (v1) at ( 90-72*0:2) {$1$};
\node (v2) at ( 90-72*1:2) {$2$};
\node (v3) at (90-72*2:2) {$3$};
\node (v4) at (90-72*3:2) {$4$};
\node (v5) at (90-72*4:2) {$5$};
\draw[->] (v1) -- (v2);
\draw[->] (v2) -- (v3);
\draw[->] (v3) -- (v4);
\draw[->] (v4) -- (v5);
\draw[->] (v5) -- (v1);
\end{tikzpicture}
      \caption{Ring network, $n=5$.}
      \label{fig:networkssC}
\end{subfigure}
 \begin{subfigure}[t]{.49\textwidth}
      \centering

\begin{tikzpicture}
\tikzstyle{every node}=[draw,shape=circle];
\node (v1) at ( 90-40*0:2) {$1$};
\node (v2) at ( 90-40*1:2) {$2$};
\node (v3) at (90-40*2:2) {$3$};
\node (v4) at (90-40*3:2) {$4$};
\node (v5) at (90-40*4:2) {$5$};
\node (v6) at (90-40*5:2) {$6$};
\node (v7) at (90-40*6:2) {$7$};
\node (v8) at (90-40*7:2) {$8$};
\node (v9) at (90-40*8:2) {$9$};
\draw[->] (v1) -- (v2);
\draw[->] (v2) -- (v3);
\draw[->] (v3) -- (v4);
\draw[->] (v4) -- (v5);
\draw[->] (v5) -- (v6);
\draw[->] (v6) -- (v7);
\draw[->] (v7) -- (v8);
\draw[->] (v8) -- (v9);
\end{tikzpicture}
      \caption{Line network, $n=9$.}
      \label{fig:networkssD}
\end{subfigure}
 \caption{the four networks we consider in our numerical simulations. Notice that only the network with $n=3$ has double arrows on each edge, representing the fully connected network. The three remaining networks all have unidirectional edges.} \label{fig:networkss}
\end{figure}
We start by considering two fully connected, and fully homogeneous scenarios, which correspond to Fig. \ref{fig:networkssA}. 

An illustration for the case $\mathcal{R}_0<1$ with $n=3$ is given in Fig. \ref{GASR0inf1}. In this configuration, we observe an extinction of the disease, as  analytically expected and proven in Section \ref{sec:DFE}. 

In Fig. \ref{PersiR0sup1}, instead, we illustrate the case $\mathcal{R}_0>1$ with $n=3$. This figure shows that the connectivity between the neurons expressed by $\kappa$ plays a fundamental role in the dynamics in the sense that even if all $R_{0i}<1$, with $\kappa$ big enough the global system might have a Basic Reproduction Number  $\mathcal{R}_0>1$, and the disease could hence remain endemic.

\begin{figure}[!h]
	\begin{center}
\includegraphics[width=16cm]{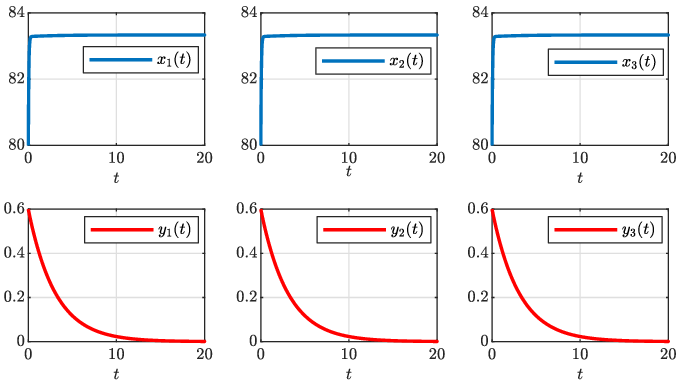}
		\caption{{\small the fully connected network of $n=3$ neurons (recall Fig. \ref{fig:networkssA}), showing the stability of disease free equilibrium when $\mathcal{R}_0=\rho(F)=0.8194<1$. In this case $R_{0i}=0.6944<1$ for $i=1,2,3$. The parameters are: $\alpha_{i\rightarrow j}=0.9$, $\kappa_{ij}=0.1$, $p=5$, $d=0.015$, $y_c=60$, $T=0.17$, $K_i=1500$ and $ \mu_i=18$.}}
	\label{GASR0inf1}
	\end{center}
\end{figure}

\begin{figure}[!h]
	\begin{center}
\includegraphics[width=16cm]{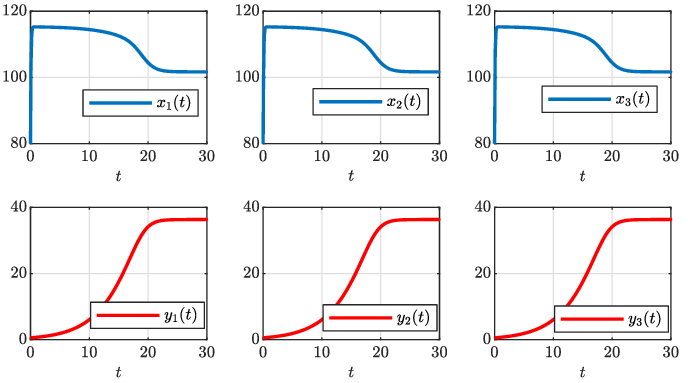}
		\caption{{\small the fully connected network of $n=3$ neurons (depicted by Fig. \ref{fig:networkssA}), showing the stability of endemic equilibrium when $\mathcal{R}_0=\rho(F)=1.1346>1$. In this case $R_{0i}=0.9615<1$ for $i=1,2,3$. The parameters are: $\alpha_{i\rightarrow j}=0.9$, $\kappa_{ij}=0.1$, $p=5$, $d=0.015$, $y_c=60$, $T=0.17$, $K_i=1500$ and $ \mu_i=13$. }}
	\label{PersiR0sup1}
	\end{center}
\end{figure}


\begin{figure}[!h]
	\begin{center}
\includegraphics[width=15cm]{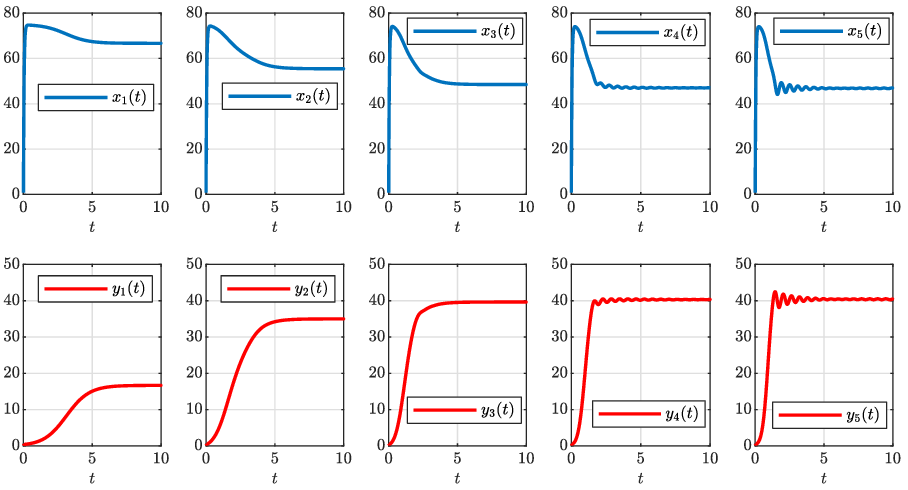}
		\caption{{\small line network of $n=5$ neurons (depicted in Fig. \ref{fig:networkssB}). This case shows that cutting the connection showed stabilization. The parameters are: $\alpha_{i\rightarrow j}=2.5$, $\kappa_{ij}=0.17$ (when considered), $p=10$, $d=0.15$, $y_c=50$, $K_i=1500$, $ \mu_i=20$ and $T=0.15$.}}
	\label{stabililine5}
	\end{center}
\end{figure}

\begin{figure}[!h]
	\begin{center}
 \includegraphics[width=15cm]{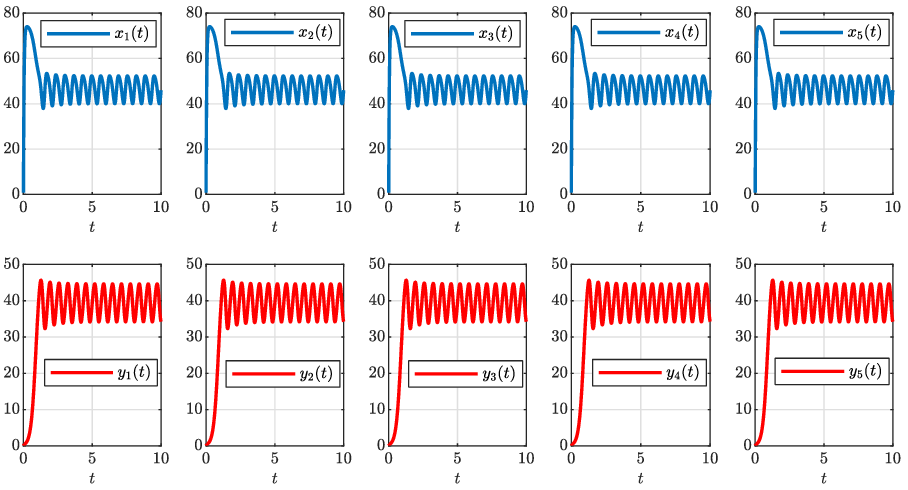}
		\caption{{\small circle (ring) network of $n=5$ neurons (depicted in Fig. \ref{fig:networkssC}). This case shows that linking the connection in Fig. \ref{stabililine5} showed destabilization of the system. The parameters are: $\alpha_{i\rightarrow j}=2.5$, $\kappa_{ij}=0.17$ (when considered), $p=10$, $d=0.15$, $y_c=50$, $K_i=1500$, $ \mu_i=20$ and $T=0.15$.}}
	\label{rinfoscilat}
	\end{center}
\end{figure}

\begin{figure}[!h]
	\begin{center}
 \includegraphics[width=16cm]{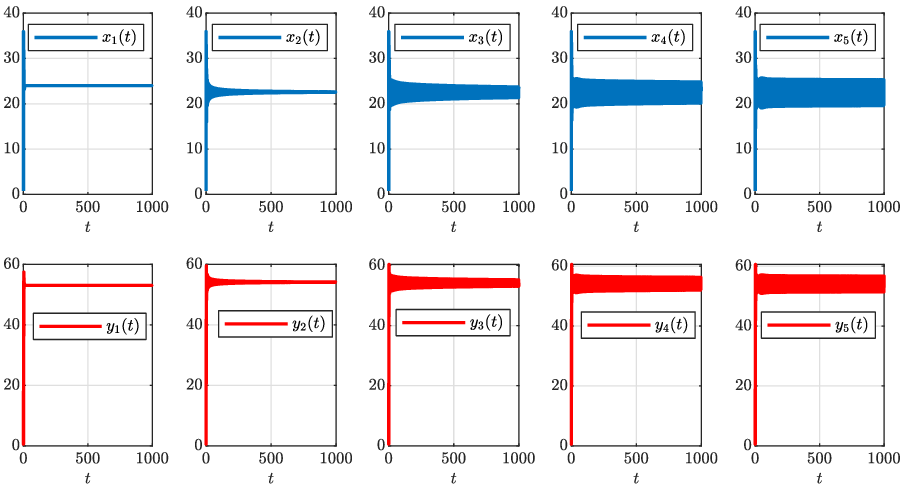}
		\caption{{\small  line network of $n=5$ neurons (depicted in Fig. \ref{fig:networkssB}). For this figure showing the oscillation of only some neurons ($n=4,5$, the last ones), we took $\kappa=0.071$. Parameters are: $p=10$, $d=0.15$, $y_c=60$, $K_i=1800$, $ \mu_i=50$, $\alpha_{i\rightarrow j}=0.9$ ($\alpha_i=3.6$) and $T=0.15$. }}
	\label{stabinstabenurone5}
	\end{center}
\end{figure}

\begin{figure}[!h]
	\begin{center}
\includegraphics[width=10cm]{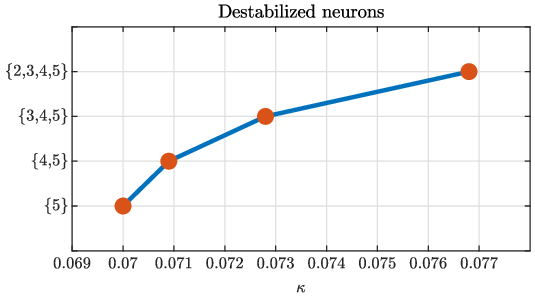}
		\caption{{\small  line network of $n=5$ neurons (depicted in Fig. \ref{fig:networkssB}). Our starting point was the case where the system is stable and we have increased the bifurcation parameter $\kappa$ and noted each value for which a neuron is destabilized. The value of bifurcations are $0.07$, $0.0709$, $0.728$ and $0.768$. The neurons are destabilized one by one from the last and going up to the second. The last neurons always look destabilized almost at the same time (approximately the same bifurcation values). Parameters are: $p=10$, $d=0.15$, $y_c=60$, $K_i=1800$, $ \mu_i=50$, $\alpha_{i\rightarrow j}=0.9$ ($\alpha_i=3.6$) and $T=0.15$.}}
	\label{diagbifura}
	\end{center}
\end{figure}

\begin{figure}[!h]
	\begin{center}
\includegraphics[width=12cm]{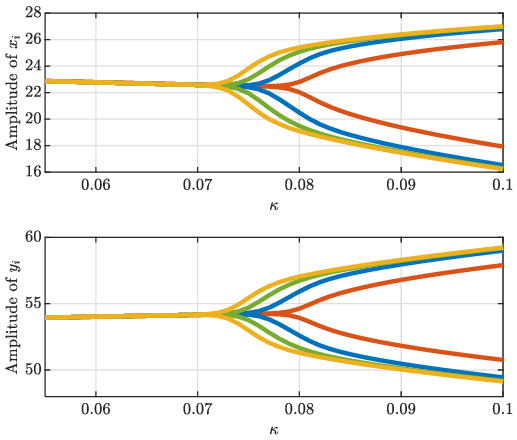}
		\caption{{\small line network of $n=5$ neurons (recall Fig. \ref{fig:networkssB}). The amplitude of the oscillations as a function of $\kappa\in[0.055,0.1]$. The amplitude of the oscillations becomes almost the same for higher values of $\kappa$. Parameters are: $p=10$, $d=0.15$, $y_c=60$, $K_i=1800$, $ \mu_i=50$, $\alpha_{i\rightarrow j}=0.9$ ($\alpha_i=3.6$) and $T=0.15$. The red, blue, green and yellow curves are associated respectively with $(x_2,y_2)$, $(x_3,y_3)$, $(x_4,y_4)$ and $(x_5,y_5)$.}}
	\label{amplitudediag}
	\end{center}
\end{figure}

\begin{figure}[!h]
	\begin{center}
 \includegraphics[width=16cm]{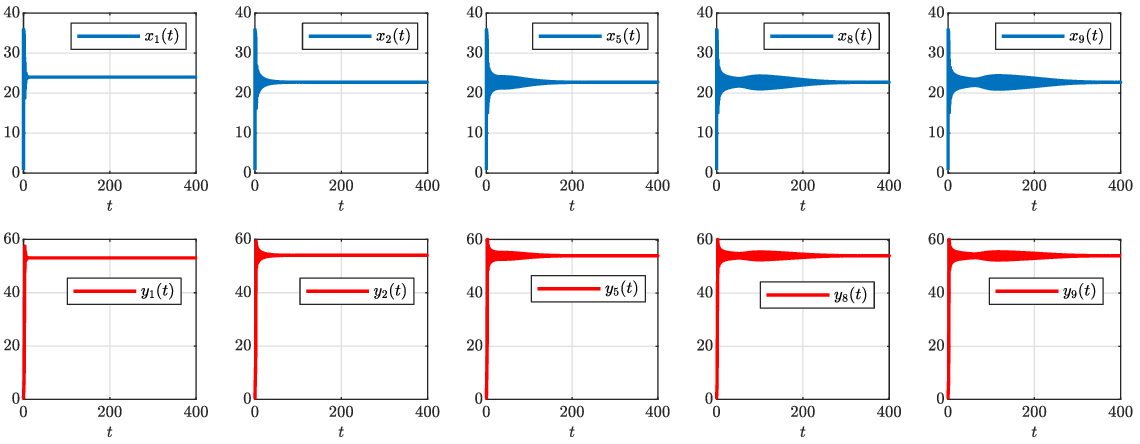}
		\caption{{\small  line network of $n=9$ neurons (recall Fig. \ref{fig:networkssD}). We only plot the time series of $(x_i,y_i)$ with $i=1,2,5,8,9$, respectively the first two, the central one and the last two neurons in the line network. By increasing the number $n$ of neurons from $5$ to $9$ with the same parameters as in Fig. \ref{diagbifura} and \ref{amplitudediag} and with $\kappa=0.125$, the system becomes stable.  Recall parameters: $p=10$, $d=0.15$, $y_c=60$, $K_i=1800$, $ \mu_i=50$, $\alpha_{i\rightarrow j}=0.45$ ($\alpha_i=3.6$) and $T=0.15$.}}
	\label{figure11}
	\end{center}
\end{figure}

We are also interested in the fully homogeneous case ($\kappa_{ij}:=\kappa$ and $\alpha_{i\rightarrow j }=\alpha/(n-1)$) with a line connection between neurons as depicted in Fig. \ref{fig:networkssB}, \ref{fig:networkssC} and \ref{fig:networkssD}. In this case, the system has the following form, for $t\geq0$,
\begin{equation*}
\begin{split}
\frac{\mathrm{d}x_1}{\mathrm{d}t} {}={}& K\beta(y_1(t-T)) - \mu x_1(t) - d x_1(t)y_1(t),\\
\frac{\mathrm{d}y_1}{\mathrm{d}t} {}={}&  d x_1(t) y_1(t) - \alpha  y_1(t),\vspace{0.1cm}\\
\frac{\mathrm{d}x_2}{\mathrm{d}t} {}={}& K\beta(y_2(t-T)) - \mu x_2(t) - d x_2(t)\left(y_2(t)+  \dfrac{\kappa \alpha}{n-1} y_1(t)\right),\\
\frac{\mathrm{d}y_2}{\mathrm{d}t} {}={}&  d x_2(t) \left(y_2(t)+\dfrac{\kappa \alpha}{n-1} y_1(t)\right)- \alpha y_2(t), \\ &\vdots \\
\frac{\mathrm{d}x_i}{\mathrm{d}t} {}={}& K\beta(y_i(t-T)) - \mu x_i(t) - d x_i(t)\left(y_i(t)+  \dfrac{\kappa \alpha}{n-1} y_{i-1}(t)\right),\\
\frac{\mathrm{d}y_i}{\mathrm{d}t} {}={}&  d x_i(t) \left(y_i(t)+\dfrac{\kappa \alpha}{n-1} y_{i-1}(t)\right)- \alpha   y_i(t),\\
&\vdots\label{syste2neuronkalphanull}
\end{split}
\end{equation*}
and the matrix \eqref{eq:matrixF} becomes
\begin{equation*}\label{eq:matrixFline}
(F)_{ij}=
R_{0}\begin{cases}
1 & \text{ if } j=i,\\
\dfrac{\kappa \alpha}{n-1} & \text{ if } j=i-1, \; i\geq 2, \\
0 & \text{ otherwise}.
\end{cases}    
\end{equation*}
Hence, we obtain
\[\mathcal{R}_0=\rho(F)=R_0=\frac{dK}{\mu \alpha}.\]
Note that in the non-fully homogeneous case, and non-fully connected like in Fig. \ref{fig:networkssD} $(b)$, $(c)$ or $(d)$ (where parameters can be different from one neuron to another) we cannot apply this definition of $\mathcal{R}_0$, since some of the $\alpha_i$'s may end up to be equal to $0$. We need then to go back to the more general theory. \\

In  Fig. \ref{stabililine5}, a fully homogeneous line network, after some initial ``wobbling'', the system approaches an Endemic Equilibrium. In particular, we notice that neurons further down the line (\textit{i.e.}, couples $(x_i,y_i)$ with larger $i$'s ($i=4,$ or $5$) approach equilibrium with a higher value for the infected compartment $y_i$.

In Fig. \ref{rinfoscilat} we ``close'' the line, forming a close ring with the five neurons. Without changing the other parameters, this additional link completely destabilizes the system. Each neuron approaches a limit cycle. Proving the existence of such a limit cycle analytically remains to prove and will be the object of future works.

In Fig. \ref{stabinstabenurone5}, we revisit the $n=5$ line structure, illustrating a case in which neurons 4 and 5 exhibit instability in their asymptotic behaviour, whereas neuron 1 converges to an equilibrium. This is possible because neuron 1 only spreads the infection, and does not receive feedback from the remaining neurons in the network. Moreover, we provide more detail on the parameter $\kappa$, which incorporates crucial information, namely the interconnectivity between neurons. We provide the bifurcation values of $\kappa$, assuming all the other parameters to be fixed, for which each neuron of the system destabilizes.

In Fig. \ref{diagbifura}, we explored the case of Fig. \ref{stabinstabenurone5} a little further in the following sense: beginning our simulations from the stable case, we increased $\kappa$, one of the key parameters for the Hopf bifurcations and managed to compute the exact value of $\kappa$ at which the first neuron of the line would oscillate. For instance, at $\kappa=0.07$ only the last (the fifth) one 
is destabilized, while for $\kappa=0.077$ only the first one is stable, while all the others oscillate. We observe that this process is non-linear. Predicting the number of neurons destabilized with respect to $\kappa$ analytically is also an open problem that we keep for future work.

In Fig. \ref{amplitudediag}, we showcase how a continuous variation of the parameter $\kappa$ impacts the asymptotic value of each $x_i$ and $y_i$, again for the $n=5$ line network. Increasing $\kappa$ causes more neurons to destabilize; for each of them, we plot the maximum and minimum values assumed by each variable as they asymptotically approach a limit cycle.

Finally, in Fig. \ref{figure11}, we explore the effect of increasing the number of neurons in the line network in the case of our choice of 
$\kappa_{ij}=\kappa=0.125$ set up in Section \ref{FHC}. Using similar parameter values as in Fig. \ref{stabinstabenurone5} (except for $\kappa_{ij}$ and $\alpha_{i \to j}$, to keep them biologically feasible). We manage to show that adding more neurons (going from 5 to 9 neurons here) can damp the 
oscillations, and lead the system of neurons to stabilize again. This process has to be explored biologically to be confirmed experimentally. 

We remark that our results in Section \ref{FHC} only concern $\mathcal{R}_0$, from which however we can only predict extinction or permanence of the disease, and nothing on the asymptotic stability of orbits. Hence, further analytical results in this direction are a promising research outlook.


\section{Conclusions and outlook}\label{sec:concl}
 In this paper, we presented a model for the delayed spread of prion in a network of $n$ neurons, building on the $1$ neuron model proposed in \cite{adimy2022neuron}. We studied its analytical properties and provided extensive numerical simulations to illustrate various scenarios.

Due to the high dimension of the system, and of the analytical complexity of systems of DDEs, many questions remain unanswered. How can we overcome the requirement that $\alpha_i>0$ in the definition of $\mathcal{R}_0$? The system has a clear biological interpretation even when this condition is not satisfied; hence, we would like to find a threshold quantity in such a scenario. Moreover, does the Endemic Equilibrium exist for all systems with $\mathcal{R}_0>1$? Is it unique? If yes, when is it stable? In our numerical exploration, we found both convergences to Endemic Equilibrium and sustained oscillations, indicating a possible stable limit cycle arising in the system. It would be of interest to understand which relations between the parameters of the system lead to the former or the latter.

Finally, we should point out here also that spatial  structure has not
been taken into account. Indeed, considering diffusion may appear challenging for the following reason: in the case of Alzheimer's disease, 
oligomers diffuse randomly in the brain tissue since the A$\beta$ monomers are no longer anchored to the cell membrane. On the contrary, for the prion disease, 
 pathological PrP$^{Sc}$ proteins spread following the axon (and thus the cell membrane) where the source of non-pathological PrP$^C$ proteins 
 are attached (thanks to a GPI anchor). Thus diffusion cannot be represented in the same way depending on the neurodegenerative disease studied. Furthermore, $PrP^{Sc}$ might not be produced immediately after the contact with previous neurons. A time lag may be needed, and thus this could involve some de-synchronisation and perhaps some chaotic behaviour. 
We leave these fundamental and other interesting questions as the possible outlook for future work.\\
\\
\noindent \textbf{Acknowledgements.} Mattia Sensi was supported by the Italian Ministry for University and Research (MUR) through the PRIN 2020 project ``Integrated Mathematical Approaches to Socio-Epidemiological Dynamics'' (No. 2020JLWP23, CUP: E15F21005420006).\\ Abdennasser Chekroun was supported by the grant PRFU: C00L03UN29012022002,
from DGRSDT of Algeria.\\
Laurent Pujo-Menjouet was supported by ANR grant PrionDiff ANR-21-CE15-0011-02. 

{\footnotesize
	\bibliographystyle{unsrt}
	\bibliography{biblio}

\begin{thebibliography}{10}

\bibitem{prusiner_prion_1998}
S.~B. Prusiner, M.~R. Scott, S.~J. DeArmond, and F.~E. Cohen.
\newblock Prion {Protein} {Biology}.
\newblock {\em Cell}, 93(3):337--348, May 1998.

\bibitem{roucou_cellular_2005}
X.~Roucou and A.~C. LeBlanc.
\newblock Cellular prion protein neuroprotective function: implications in
  prion diseases.
\newblock {\em Journal of Molecular Medicine}, 83(1):3--11, January 2005.

\bibitem{prusiner_prions_1998}
Stanley~B. Prusiner.
\newblock Prions.
\newblock {\em Proceedings of the National Academy of Sciences},
  95(23):13363--13383, 1998.

\bibitem{genereux_regulating_2015}
J.~C. Genereux and R.~L. Wiseman.
\newblock Regulating extracellular proteostasis capacity through the unfolded
  protein response.
\newblock {\em Prion}, 9(1):10--21, January 2015.

\bibitem{hetz_disturbance_2014}
C.~Hetz and B.~Mollereau.
\newblock Disturbance of endoplasmic reticulum proteostasis in
  neurodegenerative diseases.
\newblock {\em Nature Reviews Neuroscience}, 15(4):233--249, April 2014.

\bibitem{hetz_er_2017}
C.~Hetz and S.~Saxena.
\newblock {ER} stress and the unfolded protein response in neurodegeneration.
\newblock {\em Nature Reviews Neurology}, 13(8):477--491, August 2017.

\bibitem{hetz_mechanisms_2020}
C.~Hetz, K.~Zhang, and R.~J. Kaufman.
\newblock Mechanisms, regulation and functions of the unfolded protein
  response.
\newblock {\em Nature Reviews Molecular Cell Biology}, 21(8):421--438, August
  2020.

\bibitem{smith_unfolded_2016}
H.~L. Smith and G.~R. Mallucci.
\newblock The unfolded protein response: mechanisms and therapy of
  neurodegeneration.
\newblock {\em Brain}, 139(8):2113--2121, August 2016.

\bibitem{hetz_caspase-12_2003}
C.~Hetz, M.~Russelakis-Carneiro, K.~Maundrell, J.~Castilla, and C.~Soto.
\newblock Caspase-12 and endoplasmic reticulum stress mediate neurotoxicity of
  pathological prion protein.
\newblock {\em The EMBO Journal}, 22(20):5435--5445, October 2003.

\bibitem{tanaka_enhanced_2020}
M.~Tanaka, T.~Yamasaki, R.~Hasebe, A.~Suzuki, and M.~Horiuchi.
\newblock Enhanced phosphorylation of {PERK} in primary cultured neurons as an
  autonomous neuronal response to prion infection.
\newblock {\em PLOS ONE}, 15(6):e0234147, June 2020.

\bibitem{schneider_cellular_2021}
B.~Schneider, A.~Baudry, M.~Pietri, A.~Alleaume-Butaux, C.~Bizingre, P.~Nioche,
  O.~Kellermann, and J.-M. Launay.
\newblock The {Cellular} {Prion} {Protein}—{ROCK} {Connection}:
  {Contribution} to {Neuronal} {Homeostasis} and {Neurodegenerative}
  {Diseases}.
\newblock {\em Frontiers in Cellular Neuroscience}, 15:101, 2021.

\bibitem{smith_astrocyte_2020}
H.~L. Smith, O.~J. Freeman, A.~J. Butcher, S.~Holmqvist, I.~Humoud,
  T.~Schätzl, D.~T. Hughes, N.~C. Verity, D.~P. Swinden, J.~Hayes,
  L.~de~Weerd, D.~H. Rowitch, R.~J.~M. Franklin, and G.~R. Mallucci.
\newblock Astrocyte {Unfolded} {Protein} {Response} {Induces} a {Specific}
  {Reactivity} {State} that {Causes} {Non}-{Cell}-{Autonomous} {Neuronal}
  {Degeneration}.
\newblock {\em Neuron}, 105(5):855--866.e5, March 2020.

\bibitem{moreno_sustained_2012}
J.~A. Moreno, H.~Radford, D.~Peretti, J.~R. Steinert, N.~Verity, M.~G. Martin,
  M.~Halliday, J.~Morgan, D.~Dinsdale, C.~A. Ortori, D.~A. Barrett,
  P.~Tsaytler, A.~Bertolotti, A.~E. Willis, M.~Bushell, and G.~R. Mallucci.
\newblock Sustained translational repression by {eIF2}$\alpha$-{P} mediates
  prion neurodegeneration.
\newblock {\em Nature}, 485(7399):507--511, May 2012.

\bibitem{torres_prion_2011}
M.~Torres, K.~Castillo, R.~Armisén, A.~Stutzin, C.~Soto, and C.~Hetz.
\newblock Prion {Protein} {Misfolding} {Affects} {Calcium} {Homeostasis} and
  {Sensitizes} {Cells} to {Endoplasmic} {Reticulum} {Stress}.
\newblock {\em PLOS ONE}, 5(12):e15658, December 2011.

\bibitem{cook_knockdown_2014}
K.~L. Cook, P.~A.~G. Clarke, J.~Parmar, R.~Hu, J.~L. Schwartz-Roberts,
  M.~Abu-Asab, A.~Wärri, W.~T. Baumann, and R.~Clarke.
\newblock Knockdown of estrogen receptor-$\alpha$ induces autophagy and
  inhibits antiestrogen-mediated unfolded protein response activation,
  promoting {ROS}-induced breast cancer cell death.
\newblock {\em The FASEB Journal}, 28(9):3891--3905, September 2014.

\bibitem{schnell_model_2009}
S.~Schnell.
\newblock A {Model} of the {Unfolded} {Protein} {Response}: {Pancreatic}
  $\beta$-{Cell} as a {Case} {Study}.
\newblock {\em Cellular Physiology and Biochemistry}, 23(4-6):233--244, 2009.

\bibitem{trusina_rationalizing_2008}
A.~Trusina, F.~R. Papa, and C.~Tang.
\newblock Rationalizing translation attenuation in the network architecture of
  the unfolded protein response.
\newblock {\em Proc Natl Acad Sci USA}, 105(51):20280, December 2008.

\bibitem{trusina_unfolded_2010}
A.~Trusina and C.~Tang.
\newblock The unfolded protein response and translation attenuation: a
  modelling approach.
\newblock {\em Diabetes, Obesity and Metabolism}, 12(s2):27--31, October 2010.

\bibitem{wiseman_adaptable_2007}
R.~Luke Wiseman, Evan~T. Powers, Joel~N. Buxbaum, Jeffery~W. Kelly, and
  William~E. Balch.
\newblock An {Adaptable} {Standard} for {Protein} {Export} from the
  {Endoplasmic} {Reticulum}.
\newblock {\em Cell}, 131(4):809--821, November 2007.

\bibitem{adimy2022neuron}
M.~Adimy, L.~Babin, and L.~Pujo-Menjouet.
\newblock {Neuron Scale Modeling of Prion Production with the Unfolded Protein
  Response}.
\newblock {\em SIAM Journal on Applied Dynamical Systems}, 21(4):2487--2517,
  2022.

\bibitem{diekmann1990definition}
O.~Diekmann, J.~A.~P. Heesterbeek, and J.~A.~J. Metz.
\newblock {On the definition and the computation of the basic reproduction
  ratio R 0 in models for infectious diseases in heterogeneous populations}.
\newblock {\em Journal of mathematical biology}, 28(4):365--382, 1990.

\bibitem{VandenDriesscheWatmough}
P.~Van~den Driessche and J.~Watmough.
\newblock {Reproduction numbers and sub-threshold endemic equilibria for
  compartmental models of disease transmission}.
\newblock {\em Mathematical Biosciences}, 180(1):29 -- 48, 2002.

\bibitem{diekmann2010construction}
O.~Diekmann, J.~A.~P. Heesterbeek, and M.~G. Roberts.
\newblock The construction of next-generation matrices for compartmental
  epidemic models.
\newblock {\em Journal of the royal society interface}, 7(47):873--885, 2010.

\bibitem{Hale1993S}
J.~K. Hale and S.~M. Verduyn~Lunel.
\newblock {\em {Introduction to Functional Differential Equations}}.
\newblock Springer, 1993.

\bibitem{Kuang1993AP}
Y.~Kuang.
\newblock {\em {Delay Differential Equations: With Applications in Population
  Dynamics}}.
\newblock Academic Press, 1993.

\bibitem{Smith2011SNY}
H.~Smith.
\newblock {\em {An Introduction to Delay Differential Equations with
  Applications to the Life Sciences}}.
\newblock Texts in Applied Mathematics. Springer, 2011.

\bibitem{ottaviano2022global}
S.~Ottaviano, M.~Sensi, and S.~Sottile.
\newblock Global stability of multi-group {SAIRS} epidemic models.
\newblock {\em Mathematical Methods in the Applied Sciences}, 2023.

\bibitem{van2008further}
P.~Van~den Driessche and J.~Watmough.
\newblock Further notes on the basic reproduction number.
\newblock {\em Mathematical epidemiology}, pages 159--178, 2008.

\bibitem{kulpa1997poincare}
W.~Kulpa.
\newblock The {P}oincar{\'e}-{M}iranda theorem.
\newblock {\em The American Mathematical Monthly}, 104(6):545--550, 1997.

\bibitem{mawhin2019simple}
J.~Mawhin.
\newblock {Simple proofs of the Hadamard and Poincar{\'e}--Miranda theorems
  using the Brouwer fixed point theorem}.
\newblock {\em The American Mathematical Monthly}, 126(3):260--263, 2019.

\bibitem{SmithThieme2011}
H.~L Smith and H.~R Thieme.
\newblock {\em Dynamical Systems and Population Persistence}.
\newblock Americal Mathematical Society, 2011.

\bibitem{khalil2002nonlinear}
H.~K. Khalil.
\newblock Nonlinear systems third edition.
\newblock {\em Patience Hall}, 115, 2002.

\bibitem{sun2023gathering}
Z.~Sun.
\newblock A gathering of {Barbalat's} lemmas and their (unsung) cousins.
\newblock {\em arXiv preprint arXiv:2301.00466}, 2023.

\bibitem{FreedmanMoson1990PAMS}
H.~I. Freedman and P.~Moson.
\newblock Persistence definitions and their connections.
\newblock {\em Proceedings of the american mathematical society},
  109(4):1025--1033, 1990.

\bibitem{AdimyCheClau2020MBE}
M.~Adimy, A.~Chekroun, and C.~P. Ferreira.
\newblock {Global dynamics of a differential-difference system: a case of
  Kermack-McKendrick SIR model with age-structured protection phase}.
\newblock {\em Mathematical Biosciences and Engineering}, 17(2):1329--1354,
  2020.

\end{thebibliography}
}

\end{document}